\numberwithin{equation}{section}
\newtheorem{thm}{Theorem}[subsection]
\newtheorem{lem}[thm]{Lemma}
\newtheorem{cor}[thm]{Corollary}
\newtheorem{prop}[thm]{Proposition}
\newcommand{\Lnum}[3]{\left[ \begin{matrix} #1 \ ; \  #2 \\ \ \ #3\   \end{matrix} \right]}
\newcommand{\g}{\widehat{\mathfrak g}}
\newcommand{\D}{\Delta}
\newcommand{\thmref}[1]{Theorem~\ref{#1}}
\newcommand{\propref}[1]{Proposition~\ref{#1}}
\newcommand{\secref}[1]{\S\ref{#1}}
\newcommand{\lemref}[1]{Lemma~\ref{#1}}
\newcommand{\eqnref}[1]{~(\ref{#1})}
\def\um{{\underline m}}
\def\ul{{\underline l}}
\begin{document}



\title{}
\title{Kashiwara Algebras and Imaginary Verma  Modules  for $U_q(\widehat{\mathfrak{g}})$}
\author{ Ben Cox}
\author{Vyacheslav Futorny}
\author{Kailash C. Misra}
\keywords{Quantum affine algebras,  Imaginary Verma modules, Kashiwara algebras, Simple modules}
\address{Department of Mathematics \\
The Graduate School at the College of Charleston \\
66 George Street  \\
Charleston SC 29424, USA}\email{coxbl@cofc.edu}
\address{Department of Mathematics\\
 University of S\~ao Paulo\\
 S\~ao Paulo, Brazil}
 \email{futorny@ime.usp.br}
 \address{Department of Mathematics\\
 North Carolina State University\\
 Raleigh, NC 27695-8205, USA}
 \begin{abstract} We consider imaginary Verma modules for quantum affine algebra
 $U_q(\widehat{\mathfrak{g}})$, where $\widehat{\mathfrak{g}}$ is of type 1 i.e. of non-twisted type,  and construct Kashiwara type operators and the Kashiwara algebra $\mathcal K_q$. We show that  a certain quotient  $\mathcal N_q^-$ of
 $U_q(\widehat{\mathfrak{g}})$ is a simple $\mathcal K_q$-module..
\end{abstract}
\date{}
\thanks{
The first author would like to thank North Carolina State University and the University of S\~ao Paulo for their support and hospitality during his numerous visits to Raleigh and S\~ao Paulo and Fapesp for financial support (2012/22965-5).  The second author was partially supported by Fapesp (2010/50347-9) and CNPq (301743/2007-0). The third author was partially supported by the NSA grant H98230-12-1-0248.}

\subjclass[2010]{17B37, 17B67, 17B10}

\maketitle

\section{Introduction}

Let $\widehat{\mathfrak{g}}$ be an affine Lie algebra and $\Delta$ denote the set of roots with respect to the Cartan subalgebra $\widehat{\mathfrak{h}}$. Then we have a natural (standard) partition of $\Delta = \Delta_+ \cup \Delta_-$ into set of positive and negative roots. With respect to this standard partition we have a standard Borel subalgebra from which we may induce the standard Verma module. A partition 
$\Delta = S \cup -S$ of the root system $\Delta$ is said to be a closed partition if whenever $\alpha , \beta \in S$ and $\alpha + \beta \in \Delta$ we have $\alpha + \beta \in S$. It is well known that for any finite dimensional complex simple Lie algebra, all closed partitions of the root system are Weyl group conjugate to the standard partition. However, this is not the case for affine Lie algebras. The classification of closed subsets of the root system for affine Lie algebras was obtained by Jakobsen and Kac \cite{JK,MR89m:17032}, and independently by Futorny \cite{MR1078876,MR1175820}. In fact for affine Lie algebras there exists a finite number ($\geq 2$) of inequivalent Weyl orbits of closed partitions. Corresponding to each such non-standard partitions we have non-standard Borel subalgebras from which we can induce other non-standard Verma-type modules and these typically contain both finite and infinite dimensional weight spaces. The imaginary Verma module \cite{MR95a:17030} is a non-standard Verma-type module associated with the simplest non-standard partition of the root system 
$\Delta$ which is the focus of our study in this paper.

For generic $q$, the quantum affine algebra $U_q(\widehat{\mathfrak{g}})$ is the $q$-deformations of the universal enveloping algebras of
$\widehat{\mathfrak{g}}$ (\cite{MR802128}, \cite{MR797001}). It is known \cite{MR954661} that integrable highest weight modules of 
$\widehat{\mathfrak{g}}$ can be deformed to those over $U_q(\widehat{\mathfrak{g}})$ in such a way that the dimensions of the weight spaces are invariant under the deformation. Following the framework of \cite{MR954661} and \cite{MR1341758}, {\it quantum imaginary Verma modules} for  $U_q(\widehat{\mathfrak{g}})$ were constructed in (\cite {MR97k:17014}, \cite{MR1662112}) and it was shown that these modules are deformations of those over the universal enveloping algebra $U(\widehat{\mathfrak{g}})$ in such a way that the weight multiplicities, both finite and infinite-dimensional, are preserved.

Lusztig \cite{MR1035415} from a geometric view point and Kashiwara \cite{MR1115118} from an algebraic view point 
introduced the notion of canonical bases (equivalently, global crystal bases) for standard Verma modules
$V_q(\lambda)$ and integrable highest weight modules $L_q(\lambda)$. The crystal base (\cite{MR1090425, MR1115118}) can be thought of as the $q=0$ limit of the global crystal base or canonical base. An important ingredient in the construction of crystal base by Kashiwara in
 \cite{MR1115118}, is a subalgebra $\mathcal {B}_q$ of the quantum group which acts on the negative part of the quantum group  
 by left multiplication. This subalgebra $\mathcal {B}_q$, which we call the Kashiwara algebra, played an important role in the definition of the Kashiwara operators which defines the crystal base.

In this paper we construct an analog of Kashiwara algebra $\mathcal K_q$ for the imaginary Verma module $M_q(\lambda)$ for the quantum affine algebra $U_q(\widehat{\mathfrak{g}})$ by introducing certain Kashiwara-type operators.
Then we prove that certain quotient $\mathcal N_q^-$ of  $U_q(\widehat{\mathfrak{g}})$ is a simple $\mathcal K_q$-module.
This generalizes the corresponding result in \cite{CFM10} for the quantum affine algebra $U_q(\widehat{sl(2)})$. However, it is worth pointing out that some of the arguments involving explicit calculations in \cite{CFM10} do not extend to this general case.

The paper is organized as follows.
In Sections 2 and 3 we recall necessary definitions and some results that we need. In Section 4 we recall some facts about the imaginary Verma modules for $U_q(\widehat{\mathfrak{g}})$. In particular, for any dominant weight $\lambda$ with $\lambda (c) = 0$ we give a necessary and sufficient condition for the reduced imaginary Verma module $\tilde{M}_q(\lambda)$ to be simple. In Section 5 we define certain operators we call $\Omega$-operators acting on certain subalgebra $\mathcal{N}_q^-$ of $\tilde{M}_q(\lambda)$ and prove generalized commutation relations among them. We define the Kashiwara algebra $\mathcal K_q$ in terms of certain Drinfeld generators and the $\Omega$-operators in Section 6 and show that  $\mathcal N_q^-$ is a left $\mathcal K_q$-module and define a symmetric invariant bilinear form on $\mathcal N_q^-$.  Finally, in Section 7 we prove that $\mathcal N_q^-$ is simple as a $\mathcal K_q$-module and that the form defined in Section 6 is nondegenerate.
                                                                                                                                                                                                                                                                                                                                                                                                                                                                                                                                                                                                                                                                                                                                                                                                                                                                                                                                                                                                                                                                                                                                                                                                                                                                                                                                                                                                                                                                                                                                                                                        \section{ The affine Lie algebra $ \widehat{\mathfrak{g}}$.}

We begin by recalling some basic facts and constructions for the affine
Kac-Moody algebra $\widehat{\mathfrak{g}}$
and its imaginary Verma modules.
See \cite{K} for Kac-Moody algebra terminology and standard notations.

\subsection{} \label{notation} 
Let $I=\{0,\dots, N\}$ and $ A=(a_{ij})_{0\leq i,j\leq N}$ be a generalized affine Cartan matrix of type 1 for an untwisted affine Kac-Moody algebra $\widehat{\mathfrak g}$.   Let $D=(d_0,\dots,d_N)$ be a diagonal matrix with relatively prime integer entries such that the matrix $DA$ is symmetric.
Then $\widehat{\mathfrak g}$ has the loop space realization
$$
\widehat{\mathfrak g}=  {\mathfrak g} \otimes {\mathbb C}[t,t^{-1}] \oplus {\mathbb C} c \oplus {\mathbb C} d, 
$$
where $\mathfrak g$ is the finite dimensional simple Lie algebra over $\mathbb C$ with Cartan matrix $(a_{ij})_{1\leq i,j\leq N}$, $c$ is central in $\widehat{\mathfrak g}$; $d$ is the degree derivation,
so that $[d,x \otimes  t^n] = n x \otimes  t^n$ for any $x \in { \mathfrak g}$ and
$n \in {\mathbb Z}$, and $[x \otimes  t^n, y \otimes  t^m] = [x,y] \otimes  t^{n+m} +
\delta_{n+m,0}n(x|y)c$ for all $x,y \in  {\mathfrak g}$, $n,m \in {\mathbb Z}$.  

An alternative Chevalley-Serre presentation of $\widehat{\mathfrak g}$ is given
by defining it as the Lie algebra with generators $e_i, f_i, h_i$
 ($i \in   I$) and $d$ subject to the relations
\begin{align}\label{Chevalley-Serre}
[h_i,h_j] &= 0, \qquad [d,h_i]=0,\\
[h_i, e_j] &= a_{ij}e_j, \qquad [d,e_j] = \delta_{0,j}e_j,\\
[h_i, f_j] &= -a_{ij}f_j, \qquad [d, f_j] = -\delta_{0,j}f_j,\\
[e_i, f_j] &= \delta_{ij}h_i, \\
({{\text {ad}}} e_i)^{1-a_{ij}}(e_j)&= 0, \qquad 
({{\text {ad}}} f_i)^{1-a_{ij}}(f_j) = 0, \quad i \neq j.
\end{align}
We
set $\hat{\mathfrak h}$ to be the span of $\{h_0,\dots, h_N,d\}$. 

Let $\Delta_0$ be the set of roots of $\mathfrak g$ with chosen set of positive/negative roots $\Delta_{0,\pm}$.  Let $ Q_0$ be the free abelian group with basis $\alpha_i$, $1\leq i\leq N$ which is the root lattice of $\mathfrak g$. Let $\check Q_0=\sum_i\mathbb Z\check \alpha_i$ be the coroot lattice of $\mathfrak g$.   The co-weight lattice is defined to be $\check P_0=\text{Hom}(Q_0,\mathbb Z)$ with basis $\omega_i$ defined by $\langle \omega_i,  \alpha_j\rangle =\delta_{i,j}$.  The simple reflections $s_i:\check P_0\to \check P_0$ are defined by $s_i(x)=x-\langle \alpha_i,x\rangle \check\alpha_i$.  The $s_i$ also act on $Q_0$ by $s_i(y)=y-\langle y,\check\alpha_i\rangle \alpha_i$.
The Weyl group of $\mathfrak g$ is defined as the subgroup $W_0$ of $\text{Aut}\check P_0$ generated by $s_1,\dots,s_N$.  The affine Weyl group is defined as $W=W_0\ltimes \check Q_0$.  Let $\theta$ be the longest positive root and set $s_0=(s_\theta,-\check \theta)$.  Then $W$ is generated by $s_0,\dots, s_N$.  Let $\tilde W=W_0\ltimes \check P_0=W\ltimes T$ be the generalized affine Weyl group
where $T$ is the group of Dynkin diagram automorphisms.  \color{black}

Let $\Delta$ be the root system of $\widehat{\mathfrak g}$ with positive/negative set of roots $\Delta_{\pm}$and simple roots $\Pi=\{\alpha_0,\dots, \alpha_N\}$.  Define $\delta=\alpha_0+\theta$. Extend the root lattice $Q_0$ of $ \mathfrak g$ to the affine root lattice
$Q:=  Q _0\oplus {\mathbb Z}\delta$, and extend the form $(.|.)$ to $Q$
by setting $(q|\delta)=0$ for all $q \in Q_0$ and $(\delta|\delta)=0$. The generalized affine Weyl group $\tilde W$ acts on $Q$ as an affine transformation group.  In particular if $z\in \check P_0$ and $1\leq i\leq N$, then
$z(\alpha_i)=\alpha_i-\langle z, \alpha_i\rangle \delta$. 
Let $Q_+ = \sum_{i \in \dot I} {\mathbb Z}_{\ge 0}\alpha_i \oplus {\mathbb Z}_{\ge 0}\delta$.

The root system $\Delta$ of $\widehat{\mathfrak g}$ is given by
$$
\Delta = \{ \alpha + n \delta\ |\ \alpha \in  \Delta_0, n \in {\mathbb Z}\}
\cup
\{k\delta\ |\ k\in {\mathbb Z}, k \neq 0\}.
$$
The roots of the form $\alpha+n\delta$, $\alpha \in   \Delta, n \in {\mathbb Z}$ are
called real roots, and those of the form $k\delta$,
$k \in {\mathbb Z}, k \neq 0$ are called imaginary roots.  We let
$\Delta^{re}$ and $\Delta^{im}$ denote the sets of real and
imaginary roots, respectively.  The set of positive real roots
of  $\widehat{\mathfrak g}$ is $\Delta_+^{re} =  \Delta_{0,+}
\cup \{\alpha + n\delta \ |\ \alpha \in  \Delta_0, n>0\}$ and the set of positive
imaginary roots is $\Delta_+^{im} = \{k\delta\ |\ k>0\}$.
The set of positive roots of $\widehat{\mathfrak g}$ is
$\Delta_+ = \Delta_+^{re} \cup \Delta_+^{im}$.  Similarly, on the negative
side, we have $\Delta_- = \Delta_-^{re}\cup \Delta_-^{im}$, where
$\Delta_-^{re} = \Delta_{0,-} \cup \{ \alpha + n\delta\ |\ \alpha \in  \Delta_0, n<0\}$
and $\Delta_-^{im} = \{k\delta\ |\ k<0\}$.
The 
weight lattice $P$ of ${\widehat{\mathfrak g}}$ is
$P = \{ \lambda \in {\widehat{\mathfrak h}}^*\ |\ \lambda(h_i) \in {\mathbb Z}, i \in I, \lambda(d) \in {\mathbb Z}\}$.
Let $B$ denote the associated braid group with generators
$T_0, T_1, \dots, T_N$.

\subsection{}  \label{partition}
Consider the partition $\D = S \cup -S$ of the root system of $\g$
where $S=\{ \alpha+ n \delta\ |\ \alpha\in \D_{0,+}, n \in \mathbb Z\} \cup
\{k\delta\ |\ k>0\}$. This is a non-standard partition of the root system
$\D$ in the sense that $S$ is not Weyl equivalent to the set
$\D_+$ of positive roots.

\section{The quantum affine algebra $U_q(\widehat{\mathfrak g})$}

\subsection{}  \label{jimbodrinfeld}
The {\it quantum affine algebra}
$U_q(\widehat{\mathfrak g})$ is the $\mathbb C(q^{1/2})$-algebra with 1 generated by
$$ 
E_i, \enspace F_i, \enspace K_\alpha,\enspace \gamma^{\pm 1/2}, \enspace D^{\pm 1} \quad 0\leq i\leq N,\enspace \alpha\in Q,
$$
and defining relations:
\begin{align*}& DD^{-1}=D^{-1}D=K_iK_i^{-1}=K_i^{-1}K_i=\gamma^{1/2}\gamma^{-1/2}=1, \\
&[\gamma^{\pm 1/2},U_q(\mathfrak g)]=[D,K_i^{\pm1}]=[K_i,K_j]=0, \\
&(\gamma^{\pm 1/2})^2=K_\delta^{\pm 1},\\
& E_iF_j-F_jE_i = \delta_{ij}\frac{K_i-K_i^{-1}}{q_i-q_i^{-1}}, \\
& K_\alpha E_iK_\alpha^{-1}=q^{(\alpha|\alpha_i)}E_i, \ \ K_\alpha F_i K_\alpha^{-1} =q^{-(\alpha|\alpha_i)}F_i, \\
& DE_iD^{-1}=q^{\delta_{i,0}} E_i,\quad DF_iD^{-1}=q^{-\delta_{i,0}} F_i, \\
& \sum_{s=0}^{1-a_{ij}}(-1)^sE_i^{(1-a_{ij}-s)} E_jE_i^{(s)}=0= \sum_{s=0}^{1-a_{ij}}(-1)^sF_i^{(1-a_{ij}-s)} F_jF_i^{(s)}, \quad i\neq j.
\end{align*}
where
$$
q_i:=q^{d_i},\quad 
[n]_i= \frac{q^n_i-q^{-n}_i}{q_i-q^{-1}_i},\quad [n]_i!:=\prod_{k=1}^n[k]_i
$$
and 
$K_i=K_{\alpha_i}$, $E_i^{(s)}=E_i/[s]_i!$ and $F_i^{(s)}=F_i/[s]_i!$ (see \cite{Bec94a} and \cite{MR954661}). 

The quantum affine algebra
$U_q(\widehat{\mathfrak g})$  is a Hopf algebra with a comultiplication given by
\begin{align}
\Delta(K_i^{\pm 1}) &= K_i^{\pm 1} \otimes K_i^{\pm 1},\label{ki} \\
\Delta(D^{\pm 1})&=D^{\pm 1}\otimes D^{\pm 1},\qquad \Delta(\gamma^{\pm 1/2})=\gamma^{\pm 1/2}\otimes \gamma^{\pm 1/2}\label{dgamma}\\
\Delta(E_i) &= E_i\otimes 1 + K_i\otimes E_i, \label{ei}\\
\Delta(F_i) &= F_i\otimes K_i^{-1} + 1 \otimes F_i,\label{fi} 
\end{align}
and an antipode given by
\begin{gather*}
s(E_i) =-E_iK_i^{-1},  \quad 
s(F_i) = -K_iF_i, \\
s(K_i) = K_i^{-1}, \quad 
s(D)  = D^{-1}, \quad 
s(\gamma^{1/2}) = \gamma^{-1/2}.
\end{gather*}

Let $\Phi:U_q(\widehat{\mathfrak g})\to U_q(\widehat{\mathfrak g})$ be the $\mathbb C $-algebra automorphism defined by
\begin{gather}
\Phi(E_i)=F_i,\enspace \Phi(F_i) =E_i,\enspace \Phi(K_\alpha)=K_{\alpha},\label{automorphism} \\
 \Phi(D)=D,\enspace \Phi(\gamma^{\pm1/2})=\gamma^{\pm 1/2},\enspace \Phi(q^{\pm 1/2})=q^{\mp 1/2},\notag
\end{gather}
and let $\bar\Omega:U_q(\widehat{\mathfrak g})\to U_q(\widehat{\mathfrak g})$ be the $\mathbb C $-algebra anti-automorphism defined by
\begin{gather}\label{antiautomophism}
\bar\Omega(E_i)=F_i,\enspace \bar\Omega(F_i)=E_i,\enspace \bar\Omega(K_\alpha)=K_{-\alpha},\\ \bar\Omega(D)=D^{-1},\enspace \bar\Omega(\gamma^{\pm1/2})=\gamma^{\mp 1/2},\enspace \bar\Omega(q^{\pm 1/2})=q^{\mp1/2},\notag
\end{gather}
(see \cite[Section 1]{Bec94a}).
%

\subsection{}\label{secondrealization}  There is an alternative realization for $U_q(\widehat{\mathfrak{g}})$,
due to Drinfeld
\cite{MR802128}, which we shall also need.   We will use the formulation due to J. Beck \cite{Bec94a}.   Let
$U_q(\widehat{\mathfrak g})$ be the associative algebra with $1$ over $\mathbb C(q^{1/2})$-
generated by
$$  
x_{ir}^{\pm 1},\enspace h_{is}, \enspace K_i^{\pm 1}, \enspace \gamma^{\pm 1/2},D^{\pm 1} \enspace 1\leq i\leq N,r,s\in\mathbb Z, s\neq 0,
$$
with defining relations:
\begin{align}
 DD^{-1}&=D^{-1}D=K_iK_i^{-1}=K_i^{-1}K_i=\gamma^{1/2}\gamma^{-1/2}=1,\label{drinfeldfirst} \\
[\gamma^{\pm 1/2},U_q(\mathfrak g)]&=[D,K_i^{\pm 1}]=[K_i,K_j]=[K_i,h_{jk}]=0, \\
Dh_{ir}D^{-1}&=q^rh_{ir},\quad Dx_{ir}^{\pm}D^{-1}=q^rx_{ir}^{\pm},\\
K_ix_{jr}^{\pm}K_i^{-1} &= q_i^{\pm  (\alpha_i|\alpha_j)}x_{jr}^{\pm},   \\  
[h_{ik},h_{jl}]&=\delta_{k,-l} \frac{1}{k}[ka_{ij}]_i\frac{\gamma^k-\gamma^{-k}}{q_j-q_j^{-1}}\label{hs} \\
[h_{ik},x^{\pm}_{jl}]&= \pm \frac{1}{k}[ka_{ij}]_i\gamma^{\mp |k|/2}x^{\pm}_{j,k+l}, \label{axcommutator}  \\
    x^{\pm}_{i,k+1}x^{\pm}_{jl} &- q^{\pm (\alpha_i|\alpha_j)}
x^{\pm}_{jl}x^{\pm}_{i,k+1}\label{Serre}   \\
&= q^{\pm (\alpha_i|\alpha_j)}x^{\pm}_{ik}x^{\pm}_{j,l+1}
    - x^{\pm}_{j,l+1}x^{\pm}_{ik},\notag \\
[x^+_{ik},x^-_{jl}]&=\delta_{ij}
    \frac{1}{q_i-q^{-1}_i}\left( \gamma^{\frac{k-l}{2}}\psi_{i,k+l} -
    \gamma^{\frac{l-k}{2}}\phi_{i,k+l}\right), \label{xcommutator}   \\
\text{where  }
\sum_{k=0}^{\infty}\psi_{ik}z^{k} &= K_i \exp\left(
(q_i-q^{-1}_i)\sum_{l>0}  h_{il}z^{l}\right), \text{ and }\notag\\
\sum_{k=0}^{\infty}
\phi_{i,-k}z^{-k}&= K^{-1}_i \exp\left( - (q_i-q^{-1}_i)\sum_{l>0} 
h_{i,-l}z^{-l}\right).\label{phidef}\\
\text{For }i\neq j,\enspace n:=1-a_{ij} \notag  \\
\text{Sym}_{k_1,k_2,\dots,k_n}&\sum_{r=0}^{n}(-1)^r \genfrac{[}{]}{0pt}{}{n}{r} x_{ik_1}^\pm \cdots x_{ik_r}^\pm x_{jl}^\pm x_{ik_{r+1}}^\pm \cdots x_{ik_s}^\pm=0.\label{drinfeldlast}
\end{align}
 Note that Beck's paper \cite{Bec94a} on page 565 has a typo in it where he has $\phi_{i,k}z^k$ instead of $\phi_{i,-k}z^{-k}$.

In the above last relation $\text{Sym}$ means symmetrization with respect to the indices $k_1,\dots, k_n$.  Also in Drinfeld's notation one has $e^{hc/2}=\gamma$ and $e^{h/2}=q$.

The algebras given above and in \secref{jimbodrinfeld} are isomorphic \cite{MR802128}. 
If one uses the formal sums
\begin{equation}
\phi_i(u)=\sum_{p\in\mathbb Z} \phi_{ip}u^{-p},\enspace \psi_i(u)=\sum_{p\in\mathbb Z}\psi_{ip}u^{-p},\enspace
x_i^{\pm }(u)=\sum_{p\in\mathbb Z} x_{ip}^\pm u^{-p}
\end{equation}
Drinfeld's relations \eqnref{hs}-\eqnref{xcommutator} can be written as
\begin{gather}
[\phi_i(u),\phi_j(v)]=0=[\psi_i(u),\psi_j(v)] \\
\phi_i(u)\psi_j(v)\phi_i(u)^{-1}\psi_j(v)^{-1}=g_{ij}(uv^{-1}\gamma^{-1})/g_{ij}(uv^{-1}\gamma)  \\ 
\phi_i(u)x^\pm_j (v)\phi_i(u)^{-1}=g_{ij}(uv^{-1}\gamma^{\mp 1/2})^{\pm 1}x_j^\pm (v)\label{phix} \\
\psi_i(u)x_j^\pm (v)\psi_i(u)^{-1}=g_{ji}(vu^{-1}\gamma^{\mp 1/2})^{\mp 1}x_j^\pm (v)\label{psix} \\
(u-q^{\pm(\alpha_i|\alpha_j)} v)x^\pm_i (u)x^\pm_j (v)=(q^{\pm (\alpha_i|\alpha_j)}u-v)x_j^\pm(v)x_i^\pm(u) \\
[x_i^+(u),x_j^-(v)]=\delta_{ij}(q_i-q^{-1}_i)^{-1}(\delta(u/v\gamma)\psi_i(v\gamma^{1/2})-\delta(u\gamma/v)\phi_i(u\gamma^{1/2}))\label{xx}
\end{gather}
where $g_{ij}(t)=g_{ij,q}(t)$ is the Taylor series at $t=0$ of the function $(q^{(\alpha_i|\alpha_j)}t-1)/(t-q^{(\alpha_i|\alpha_j)})$ and $\delta(z)=\sum_{k\in\mathbb Z}z^{k}$ is the formal Dirac delta function.

\subsection{}\label{drinfeldgen}

Let $U_q^+=U_q^+(\g)$ (resp. $U_q^-=U_q^-(\g)$) be the subalgebra of
$U_q(\g)$ generated by $E_i$ (resp. $F_i$), $i \in I$, and
let $U_q^0=U_q^0(\g)$ denote the subalgebra generated by
$K_i^{\pm 1}$ ($i \in I$) and $D^{\pm 1}$.

Beck in \cite{Bec94a} and \cite{Bec94b} has given a total ordering of the
root system $\D$ and a PBW like basis for $U_q(\g)$.
Below we follow the construction developed by Damiani \cite{Dam98}, Gavarini \cite{Gav99} and \cite{BK96} and
let $E_{\beta}$ denote the root vectors for each
$\beta \in \Delta_+$ counting with multiplicity for the
imaginary roots.  One defines $F_\beta=E_{-\beta}:=\bar\Omega(E_\beta)$ for $\beta\in\Delta_+$ (refer to \eqnref{antiautomophism}).

For any affine Lie algebra $\widehat{\mathfrak g}$, there exists a map $\pi :\mathbb Z \to I$
such that, if we define
$$
\beta_k =
\begin{cases} 
&s_{\pi(0)}s_{\pi(-1)}\cdots s_{\pi(k+1)}(\alpha_{\pi(k)})
\qquad \text{ for all } k < 0, \\
&\alpha_{\pi(0)}\hskip 130pt k=0, \\
& \alpha_{\pi(1)}\hskip 130pt k=1, \\
&s_{\pi(1)}s_{\pi(2)} \cdots s_{\pi(k-1)}(\alpha_{\pi(k)})
\qquad  \text{ for all } k> 1,
\end{cases}
$$
then the map $\pi':\mathbb Z \mapsto \Delta_+^{re}$ given by $\pi'(k)=\beta_k$
is a bijection. Note that the map $\pi$, and hence the total ordering, is not unique.
We fix $\pi$ so that
$ \{ \beta_k\ |\ k \le 0\} =
\{ \alpha + n\delta \ |\ \alpha \in  \Delta_{0,+}, n \ge 0\}$ and
$\{ \beta_k\ |\ k \ge 1\} =
\{ -\alpha + n\delta\ |\ \alpha \in  \Delta_{0,+}, n>0\}$.  
One also defines the set of imaginary roots with multiplicity as
\begin{gather*}
\Delta_+(\text{im}):=\Delta_+^{\text{im}}\times  I_0,
\end{gather*}
where  $I_0=\{1,...,N\}$.

It will be convenient for us to invert Beck's original ordering
of the positive roots (see \cite[\S 1.4.1]{BK96}).   Let
\begin{equation}
\beta_0 > \beta_{-1} > \beta_{-2}> \dots >\delta >2\delta>  \dots
>\beta_2 >\beta_1,
\end{equation}
(see \cite[\S 2.1]{Gav99} for this ordering).
We define $-\alpha < -\beta$ iff $\beta > \alpha$ for all
positive roots $\alpha, \beta$, so we obtain a corresponding ordering
on $\Delta_-$.

The following elementary observation on the ordering will play
a crucial role later.  Write $A<B$ for two sets $A$ and $B$ if
$x < y$ for all $x \in A$ and $y \in B$.  Then Beck's total
ordering of the positive roots can be divided into three sets:
$$
\{ \alpha + n\delta\ |\ \alpha \in  \Delta_{0,+}, n \ge 0\} >
\{k\delta \ |\ k>0\} > \{-\alpha+k\delta\ |\ \alpha \in   \Delta_{0,+}, k>0\}.
$$
Similarly, for the negative roots, we have,
$$
\{ -\alpha - n\delta\ |\ \alpha \in \Delta_{0,+}, n \ge 0\} <
\{-k\delta \ |\ k>0\} < \{\alpha-k\delta\ |\ \alpha \in   \Delta_{0,+}, k>0\}.
$$

The action of the braid group generators $T_i$ on the generators
of the quantum group $U_q(\widehat{\mathfrak g})$ is given by the following.
\begin{align*}
T_i(E_i) &= -F_i K_i, \qquad T_i(F_i) = -K_i^{-1}E_i, \\
T_i(E_j) &= \sum_{r=0}^{-a_{ij}} (-1)^{r-a_{ij}}
q_i^{-r}
E_i^{(-a_{ij}-r)}E_jE_i^{(r)}, \qquad \text{if } i \neq j,\\
T_i(F_j) &=  \sum_{r=0}^{-a_{ij}} (-1)^{r-a_{ij}}q_i^r
F_i^{(r)}F_jF_i^{(-a_{ij}-r)}, \qquad \text{if } i \neq j,\\
T_i(K_j) &= K_jK_i^{-a_{ij}}, \qquad
T_i(K_j^{-1}) = K_j^{-1}K_i^{a_{ij}}, \\
T_i(D) &= DK_i^{-\delta_{i,0}}, \qquad
T_i(D^{-1}) = D^{-1}K_i^{\delta_{i,0}}.
\end{align*}

For each $\beta_k \in \Delta_+^{re}$, define the root vector
$E_{\beta_k}$ in $U_q(\g)$ by
\begin{align}
E_{\beta_k} =
\begin{cases}
&T^{-1}_{\pi(0)}T^{-1}_{\pi(-1)}\cdots T^{-1}_{\pi(k+1)}(E_{\pi(k)})
\quad \text{ for all } k < 0, \\
&E_{\pi(0)}  \hskip 120ptk=0,\\
&E_{\pi(1)} \hskip 120pt k = 1, \\
&T_{\pi(1)}T_{\pi(2)} \cdots T_{\pi(k-1)}(E_{\pi(k)})
\qquad  \text{ for all } k>1.
\end{cases}\label{realrootvector}
\end{align}
 Orient the Dynkin diagram of $\mathfrak g$ by defining a map $o:V\to \{\pm 1\}$ so that for adjacent vertices $i$ and $j$ one has $o(i)=-o(j)$.   Beck defines  $\widehat{T}_{\omega_i}=o(i)T_{\omega_i}$ 
and obtains (\cite[Section 4]{Bec94a}) for $i\in \dot I$ and $k\in\mathbb Z$, 
\begin{align*}
x_{ik}^-:&=\widehat{T}_{\omega_i}^k(F_i), \enspace x_{ik}^+:=\widehat{T}_{\omega_i}^{-k}(E_i).
\end{align*}

The following result is due to Iwahori, Matsumoto and Tits (see \cite{Bec94a}, Section 2).
\begin{prop}
Suppose $w\in\tilde W$ and $w=\tau s_{i_1}\cdots s_{i_n}$ is a reduced decomposition in terms of simple reflections.   Then $T_w=\tau T_{i_1}\cdots T_{i_n}$ does not depend on the reduced decomposition of $w$ chosen, but rather only on $w$. 
\end{prop}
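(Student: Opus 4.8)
The plan is to reduce the well-definedness of $T_w$ to two classical ingredients: first, that the braid group generators $T_i$ satisfy the braid relations of the Coxeter group $W$ when acting as automorphisms of $U_q(\g)$; and second, the theorem of Matsumoto and Tits that any two reduced words for a fixed element of a Coxeter group are connected by a sequence of braid moves.

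First I would dispose of the ambiguity in the factorization itself. Recall $\tilde W = W \ltimes T$, where $T$ is the group of Dynkin diagram automorphisms and consists precisely of the length-zero elements of $\tilde W$. Hence any $w \in \tilde W$ has a \emph{unique} expression $w = \tau v$ with $\tau \in T$ and $v \in W$, and any reduced decomposition $w = \tau s_{i_1} \cdots s_{i_n}$ forces $\tau$ to be this length-zero part and $s_{i_1}\cdots s_{i_n}$ to be a reduced word for $v$ with $n = \ell(v)$. Thus the factor $\tau$ appearing in $T_w = \tau T_{i_1}\cdots T_{i_n}$ is unambiguous, and the whole problem collapses to showing that $T_v := T_{i_1}\cdots T_{i_n}$ depends only on $v$.

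Next I would invoke the Matsumoto--Tits theorem: since $W$ is a Coxeter group on $s_0,\dots,s_N$, any two reduced words for $v$ are related by finitely many braid moves, each replacing a subword $\underbrace{s_i s_j s_i\cdots}_{m_{ij}}$ by $\underbrace{s_j s_i s_j\cdots}_{m_{ij}}$, where $m_{ij}$ is the order of $s_i s_j$. Because the operators $T_i$ satisfy precisely the corresponding braid relations $\underbrace{T_i T_j T_i\cdots}_{m_{ij}} = \underbrace{T_j T_i T_j \cdots}_{m_{ij}}$ in $\mathrm{Aut}\,U_q(\g)$, each such move leaves the product $T_{i_1}\cdots T_{i_n}$ unchanged. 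Since any two reduced words are linked by a chain of these moves, $T_v$ is independent of the chosen word, and therefore $T_w = \tau T_v$ depends only on $w$.

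The main obstacle is the input that the $T_i$ genuinely satisfy the braid relations on $U_q(\g)$; this is not formal and is the substantive part (Lusztig's theorem, reproved by Beck in the present conventions), verified by reduction to the rank-two subalgebras and explicit checking on the Chevalley generators. A secondary point to keep clean is the compatibility of the diagram automorphism with the braid generators, namely $\tau T_i \tau^{-1} = T_{\tau(i)}$, which guarantees that moving $\tau$ through a word is consistent; but with $\tau$ fixed on the left by the uniqueness argument above, only invariance under braid moves is actually needed.
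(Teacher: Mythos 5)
Your argument is correct, and it is essentially the standard proof underlying the citation: the paper itself states this proposition without proof, attributing it to Iwahori--Matsumoto and Tits via Beck's Section~2. Your three ingredients --- uniqueness of the factorization $w=\tau v$ with $\tau$ the length-zero (diagram-automorphism) part, the Matsumoto--Tits theorem connecting any two reduced words by braid moves, and Lusztig's theorem that the $T_i$ satisfy the braid relations as automorphisms of $U_q(\widehat{\mathfrak g})$ --- are exactly what the cited sources supply, and you correctly isolate the braid relations as the one substantive, non-formal input.
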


Fix $i\in I_0$ and $k\geq 0$. The proposition above in the particular case of the reduced decomposition of $\omega_i=\tau s_{i_1}\cdots s_{i_r}\in \check P_0\subset \tilde W$ where $\tau$ is a diagram automorphism and the $s_i$ are simple reflections, gives
$$
x_{ik}^+=\widehat{T}_{\omega_i}^{-k}(E_i)=o(i)^k( \tau T_{i_1}\cdots T_{i_r})^{-k}(E_i)=o(i)^k  T_{j_1}\cdots T_{j_m}\tau^{-k}(E_i),
$$
for some $j_t\in I$. 

Fixing still $i\in I_0$ and $k\geq 0$, choose now $w_{\alpha_i+k\delta}\in \tilde W$, and $j\in I$, such that $w_{\alpha_i+k\delta}(\alpha_j)=\alpha_i+k\delta$.  Writing $w_{\alpha_i+k\delta}=s_{l_1}\cdots s_{l_p}$ as a reduced decomposition of simple reflections,  Beck defines 
\begin{align*}
E_{\alpha_i+k\delta}:&=T_{w_{\alpha_i+k\delta}}(E_j)=T_{l_1}\cdots T_{l_p}(E_j),
\end{align*}
which according to Lusztig is independent of the choice of $w_{\alpha_i+k\delta}$, its reduced decomposition and $j\in I$.  In particular we can choose $j=\tau^{-k}(i)$ and $w=s_{j_1}\cdots s_{j_m}$, so that $s_{j_1}\cdots s_{j_m}(\alpha_j)=s_{j_1}\cdots s_{j_m}(\alpha_{\tau^{-k}(i)})=\alpha_i+k\delta$.  Then 
\begin{equation}
E_{\alpha_i+k\delta}=T_{j_1}\cdots T_{j_m}(E_{\tau^{-k}(i)})=o(i)^k x^+_{ik}.\label{rootvector1}
\end{equation}
Now one defines
\begin{align}\label{rootvector2}
F_{\alpha_i+k\delta}
&=\bar\Omega(E_{\alpha_i+k\delta})=o(i)^k  \bar\Omega(x^+_{ik})=o(i)^k  \bar\Omega(\widehat{T}_{\omega_i}^{-k}(E_i))  \\
&=o(i)^k  \widehat{T}_{\omega_i}^{-k}(\bar\Omega(E_i))=o(i)^k  \widehat{T}_{\omega_i}^{-k}(F_i) =o(i)^k  x_{i,-k}^-,\notag
\end{align}
as $T_j\bar\Omega=\bar\Omega T_j$ and $T_\tau \bar\Omega=\bar\Omega T_\tau$.

If $k<0$ and $i\in I_0$, then $-\alpha_i-k\delta \in \Delta_+^{\text{re}}$, so that $-\alpha_i-k\delta=\beta_l=s_{\pi(1)}\cdots s_{\pi(l-1)}(\alpha_{\pi(l)})$ for $l> 1$ and $-\alpha_i-k\delta=\beta_l= \alpha_{\pi(1)}$ if $l=1$. Then for $l>1$,
\begin{align}
E_{-\alpha_i-k\delta}&=E_{\beta_l}=T_{\omega_i}^{-k}T_i^{-1}(E_i)=-T_{\omega_i}^{-k}(K_i^{-1}F_i) \label{rootvector3}
\\
&=-o(i)^kT_{\omega_i}^{-k}(K_i^{-1})x_{i,-k}^-=-o(i)^k K_i^{-1}\gamma^{-k}x_{i,-k}^-\notag
\end{align}
as $\omega_i(-\alpha_i)=-\alpha_i+\delta$ (see \secref{notation}) so that $\omega_i^{-k}s_i(\alpha_i)=\omega_i^{-k}(-\alpha_i)=-\alpha_i-k\delta$ and $T_{\omega_i}(K_i^{-1})=K_{-\alpha_i+\delta}$.  Now 
\begin{align}\label{rootvector4}
F_{-\alpha_i-k\delta}=\bar\Omega(E_{-\alpha_i-k\delta})
&=-o(i)^k \bar\Omega(K_i^{-1}\gamma^{-k}x_{i,-k}^-)\\ 
&=-o(i)^k K_i \gamma^{k} x_{i,k}^+.\notag
\end{align}

Then as shown in \cite[Theorem 4.7]{Bec94a}, for $k>0$
\begin{align*} 
\psi_{ik}&=(q_i-q_i^{-1})\gamma^{k/2}[E_i,\hat T_{\omega_i}^k(F_i)] =(q_i-q_i^{-1})\gamma^{k/2}[E_i, x_{ik}^-], \\
\phi_{i,-k}&=(q_i-q_i^{-1})\gamma^{-k/2}[F_i,\hat T_{\omega_i}^kE_i]=(q_i-q_i^{-1})\gamma^{-k/2}[F_i, x_{i,-k}^+],
\end{align*}
$\psi_{i,0}:=K_i$, $\phi_{i,0}:=K_i^{-1}$, 
and for any $\tau \in \mathcal T$, 
\begin{equation}
T_\tau(E_i):=E_{\tau(i)}, \quad 
T_\tau(F_i):=F_{\tau(i)},\quad T_\tau(K_i):=K_{\tau(i)}\label{tau}.
\end{equation}
One writes $\tau$ for $T_\tau$.
Note also that $\tau s_i\tau^{-1}=s_{\tau(i)}$ for all $0\leq i\leq n$.  

Each real root space is 1-dimensional, but each imaginary root
space is $N$-dimensional.  Hence, for each positive imaginary
root $k\delta$ ($k>0$) one defines the $N$ imaginary root vectors,
$E_{k \delta}^{(i)}$ ($i\in  I_0$) by
\begin{align}
\exp\left(
(q_i-q^{-1}_i)\sum_{k=1}^{\infty}E_{k \delta}^{(i)}z^k\right) &=
1 + (q_i-q^{-1}_i)\sum_{k=1}^{\infty} K_i^{-1}[E_i, x_{i,k}^-]z^k\label{imaginaryrootvectordef}
 \\
&=1 +  \sum_{k=1}^{\infty} K_i^{-1} \psi_{ik}\left(\gamma^{-1/2}z\right)^k  \notag  \\
&=  \exp\left( (q_i-q^{-1}_i)\sum_{l>0}  h_{il}\gamma^{-l/2}z^{l}\right). \notag 
 \end{align}
 So $E_{k\delta}^{(i)}=h_{ik}\gamma^{-k/2}$ for all $k>  0$.  For $k<0$ we also define $E_{k\delta}^{(i)}:=\bar\Omega(E_{-k\delta}^{(i)})=h_{ik}\gamma^{k/2}$. Our definition of $E_{k\delta}^{(i)}$ is the same as  \cite{Dam98}, Definition 7.  
Recall that the $R$- ``matrices" are defined having values in $U_q(\widehat{\mathfrak g})\widehat{\otimes} U_q(\widehat{\mathfrak g})$  (see \cite{Lus93} for the definition of $U_q(\widehat{\mathfrak g})\widehat{\otimes} U_q(\widehat{\mathfrak g})$ and  \cite[Section 5]{Bec94a}) for $1\leq i\leq N$ by 
\begin{align}
R_i&=\sum_{n\geq 0}(-1)^nq_i^{\frac{-n(n-1)}{2}}(q_i-q_i^{-1})^n[n]_i!T_i(F_i^{(n)})\otimes T_i(E_i^{(n)}),  \label{ri}\\ 
&=\sum_{n\geq 0} \frac{(q_i^{-1}-q_i)^nq_i^{\frac{-n(5n-1)}{2}}}{[n]_i!}E_i^{n}K_i^{-n}\otimes F_i^{n}K_i^n, \notag\\ \notag  \\
\bar{R}_i&=T_i^{-1}\otimes T_i^{-1}\circ R_i^{-1}=\sum_{n\geq 0}q_i^{\frac{n(n-1)}{2}}(q_i-q_i^{-1})^n[n]_i!F_i^{(n)}\otimes E_i^{(n)}.\label{barri}
\end{align}
These operators have inverses
\begin{align*}
R_i^{-1}
&=\sum_{n\geq 0} \frac{(q_i-q_i^{-1})^nq_i^{\frac{-n(3n+1)}{2}}}{[n]_i!}E_i^{n}K_i^{-n}\otimes F_i^{n}K_i^n \\  \\
\bar{R}_i^{-1}&=\sum_{n\geq 0}\frac{q_i^{\frac{-n(n-1)}{2}}(q_i^{-1}-q_i)^n}{[n]_i!}F_i^{n}\otimes E_i^{n}
\end{align*}

Suppose $w\in \tilde W$ and $\tau s_{i_1}\cdots s_{i_r}$ is a reduced presentation for $w$ where $\tau$ is defined as in \eqnref{tau}.   Beck defines the following ``$R$-matrices":
\begin{align}
R_w&=\tau (S_{i_1}S_{i_2}\cdots S_{i_{r-1}}(R_{i_r})\cdots S_{i_1}(R_{i_2})R_{i_1}),\label{rw}\\ 
\bar R_w&=\tau (S_{i_r}^{-1}S_{i_{r-1}}^{-1}\cdots S_{i_{2}}^{-1}(R_{i_1})\cdots S_{i_r}^{-1}(\bar R_{i_{r-1}})\bar R_{i_r}).  \label{barrw}
\end{align}

\color{black}
Using the root partition $ S = \{\alpha + k\delta  \ |\alpha\in \Delta_{0,+},\ k \in \mathbb Z \} \cup
\{ l\delta \ |\ l \in \mathbb Z_{>0}\}$ from Section 2.3, we define:
 \vskip 5pt
$U_q^+(S)$ to be the subalgebra of $U_q(\widehat{\mathfrak g})$ generated by $x^+_{i,k}$,  $(1\leq i\leq N, k\in \mathbb Z)$ and
$h_{i,l}$ $(1\leq i\leq N,\ l>0)$;
 \vskip 5pt
$U_q^-(S)$ to be the subalgebra of $U_q(\widehat{\mathfrak g})$ generated by $x^-_{i,k}$
$(1\leq i\leq N, k\in \mathbb Z)$ and
$h_{i,-l}$ $(1\leq i\leq N,\ l>0)$, and
\vskip 5pt
$U_q^0(S)$ to be the subalgebra of $U_q(\widehat{\mathfrak g})$ generated by $K^{\pm 1}_i$  $(1\leq i\leq N)$,
$\gamma^{\pm 1/2}$, and $D^{\pm 1}$.   Thus $U_q^0(S)=U_q^0(\hat{\mathfrak g})$.

\subsection{}

Let $\omega$ denote the standard $\mathbb C(q^{1/2})$-linear antiautomorphism of
$U_q(\widehat{\mathfrak g})$, and set $E_{-\alpha} = \omega(E_{\alpha})$ for all $\alpha \in \Delta_+$.
Then $U_q$ has a basis of elements of the form $E_-HE_+$,
where $E_{\pm}$ are ordered monomials in the $E_{\alpha}$,
$\alpha\in \Delta_{\pm}$,  and $H$ is a monomial in $K_i^{\pm 1}$, $\gamma^{\pm 1/2}$,
and $D^{\pm 1}$ (which all commute).

Furthermore, this basis is, in Beck's terminology, convex, meaning
that, if $\alpha, \beta \in \Delta_+$ and $ \beta >  \alpha$, then
\begin{equation}\label{convexity}
E_{\beta}E_{\alpha} - q^{(\alpha|\beta)}E_{\alpha}E_{\beta} =
\sum_{\alpha <\gamma_1<\dots <\gamma_r <\beta}c_{\gamma}
E_{\gamma_1}^{a_1}\cdots E_{\gamma_r}^{a_r}
\end{equation}
for some integers $a_1,\dots, a_r$ and scalars
$c_{\gamma}\in \mathbb C[q, q^{-1}]$,
$\gamma = (\gamma_1,\dots,\gamma_r)$ (see \cite[Proposition 1.7c]{BK96}, \cite{LS90}), and
similarly for the negative roots.  The above is called the {\it Levendorski and So{\u\i}belman's convexity formula}.

Set $\mathbb A= \mathbb C[q^{1/2},q^{-1/2}, \frac{1}{[n]_{q_i}}, i\in I, n>1]$.   We first begin with a slightly different $\mathbb A$-form than in \cite{MR1662112}. Namely we define this algebra $U_{\mathbb A}=U_{\mathbb A}(\widehat{\mathfrak g})$ to be the $\mathbb A$-subalgebra
of $U_q(\widehat{\mathfrak g})$ with 1 generated by the elements 
$$  
x_{ir}^{\pm 1},\enspace h_{is}, \enspace K_i^{\pm 1}, \enspace \gamma^{\pm 1/2},D^{\pm 1} ,  \Lnum{K_i}{s}{n},\Lnum{D}{s}{n},\Lnum{\gamma}{s}{1}, \Lnum{\gamma\psi_i}{k,l}{1}
$$
for $1\leq i\leq N,r,s\in\mathbb Z,s\neq 0$ where following \cite{MR954661}, for each $i \in I$, $s \in \mathbb Z$ and $n \in  \mathbb Z_+$,
we define the {\it Lusztig elements} in $U_q(\g)$:
\begin{align}
\Lnum{\gamma}{s}{1}_i&= \frac{\gamma^{s}-\gamma^{-s}}{q_i-q^{-1}_i},\\
\Lnum{\gamma\psi_i}{k,l}{1}&= \frac{\gamma^{\frac{k-l}{2}}\psi_{i,k+l} -
    \gamma^{\frac{l-k}{2}}\phi_{i,k+l}}{q_i-q^{-1}_i} \\ 
\Lnum{K_i}{s}{n} &=
\prod_{r=1}^n
\frac{K_iq_i^{s-r+1} - K_i^{-1}q_i^{-(s-r+1)}}{q_i^r-q_i^{-r}}, \quad \text{and}\\
\Lnum{D}{s}{n} &= \prod_{r=1}^n
\frac{Dq_0^{s-r+1} - D^{-1}q_0^{-(s-r+1)}}{q_0^r-q_0^{-r}}.
\end{align}
where $q_0=q^{d_0}$. 
  This $\mathbb A$-form can be shown to be the same as that in  \cite{MR1662112} with the exception that we have added the generators $\gamma^{\pm 1/2}$, $\Lnum{\gamma}{s}{1}_i$ and $\Lnum{\gamma\psi_i}{k,l}{1}$. 
  Let $U_{\mathbb A}^+$ (resp. $U_{\mathbb A}^-$) denote the subalgebra of
$U_{\mathbb A}$ generated by the $x_{ik}^+$, $h_{il}$, where $k\in\mathbb Z$, $l\in\mathbb N\backslash\{0\}$, $1\leq i\leq N$ (resp. $x_{ik}^-$, $h_{il}$, where $k\in\mathbb Z$, $l\in-\mathbb N\backslash\{0\}$, $1\leq i\leq N$ ), $i \in I$, and
let $U_{\mathbb A}^0$ denote the subalgebra of $U_{\mathbb A}$ generated
by the elements $\gamma^{\pm 1/2},K_i^{\pm 1}, \Lnum{K_i}{s}{n}$,
$ D^{\pm 1}, \Lnum{D}{s}{n}$,
$\Lnum{\gamma}{s}{1}_i$, $ \Lnum{\gamma\psi_i}{k,-k}{1}$.
Note that if $k+l>0$ (resp. $k+l<0$), then $ \Lnum{\gamma\psi_i}{k,l}{1}\in U_{\mathbb A}^+$ (resp. $ \Lnum{\gamma\psi_i}{k,l}{1}\in U_{\mathbb A}^-$).

Let $Aut(\Gamma)$ be the set of automorphisms of the afffine Dynkin diagram $\Gamma$.  Recall $I_0=\{1,...,N\}$, and let $\pi:\mathbb Z\ni r\mapsto\pi_r\in I$, $N_1,...,N_n\in\mathbb N$, $\tau_1,...,\tau_n\in Aut(\Gamma)$ be such that: 

\begin{enumerate}[i).]
\item $N_i=\sum_{j=1}^i l(\omega_j)$ $\forall i\in I_0$ (where $\langle \omega_i,\alpha_j\rangle=\delta_{ij} $ for all $i,j\in I_0$); 

\item  $s_{\pi_1}\cdots s_{\pi_{_{N_i}}}\tau_i=\sum_{j=1}^i \omega_j$ $\forall i\in I_0$; 

\item $\pi_{r+N_n}=\tau_n(\pi_r)$ $\forall r\in\mathbb  Z$;
\end{enumerate}
\color{black}
(these conditions imply that for all $r<r^{\prime}\in\mathbb Z$,  $s_{\pi_r}s_{\pi_{r+1}}\cdots s_{\pi_{r^{\prime}-1}}s_{\pi_{r^{\prime}}}$ is a reduced expression, see \cite{IM65} and \cite{K})

Then 
$\pi$ induces a map
$$
\mathbb Z\ni r\mapsto w_r\in W\ \ {\text{
defined\ by}}\ \ w_r=\begin{cases}s_{\pi_0}\cdots  s_{\pi_{r+1}}&{\text{if}}\ r<0,
 \\   1
&{\text{if}}\ r=0,1,\\  s_{\pi_1}\cdots  s_{\pi_{r-1}}&{\text{if}}\ r> 1,
\end{cases}
$$
\color{black}
which gives the bijection
$$\mathbb Z\ni r\mapsto\beta_r=w_r(\alpha_{\pi_r})\in\Phi^{{\text{
re}}}_+.$$
Of course we also have a bijection: $\{\pm\}\times\mathbb Z\leftrightarrow\Phi^{{\text{
re}}}$.

For all $\alpha=\beta_r\in\Phi^{{\text{
re}}}_+$ as in \eqnref{realrootvector} the root vectors $E_{\alpha}$ can now be written as: 
$$E_{\beta_r}=\begin{cases}
T_{w_r^{-1}}^{-1}(E_{\pi_r})&{\text{
if}}\ r< 0, \\
E_{\pi(0)}  & \text{if } r=0,\\
E_{\pi(1)}  & \text{if }r = 1, \\
 T_{w_r}(E_{\pi_r})&{\text{
if}}\ r> 1\\\end{cases}$$ 
and we define
$$F_{\alpha}=\bar\Omega(E_{\alpha}).$$

For $r\in\mathbb Z$, we define
$$\beta_r^{\pm}=\begin{cases}
\pm\beta_r&{\text{
if}}\ r\leq 0\cr\mp\beta_r&{\text{
if}}\ r> 0;\end{cases}$$
then of course 
$$\{\beta_r^+|r\in\mathbb Z\}=\{m\delta+\alpha\in\Phi|m\in\mathbb Z,\alpha\in Q_{0,+}\},$$
$$\{\beta_r^-|r\in\mathbb Z\}=\{m\delta-\alpha\in\Phi|m\in\mathbb Z,\alpha\in Q_{0,+}\}.$$

The root vectors do depend on $\pi$ (for example if $a_{ij}=a_{ji}=-1$ we have $T_i(E_j)\neq T_j(E_i)$). What is independent of $\pi$ are the root vectors relative to the roots $ m\delta\pm\alpha_i$:
$$
E_{ m\delta+\alpha_i}=T_{\omega_i}^{-m}(E_i),\ \ E_{ m\delta-\alpha_i}=T_{\omega_i}^{m}T_i^{-1}(E_i).$$

Let $m\in\mathbb Z$, $\alpha\in Q_{0,+}$ be such that $m\delta\pm\alpha\in\Delta$; consider the following modified root vectors:

$$X_{m\delta+\alpha}=\begin{cases}
 E_{m\delta+\alpha}&{\text{
if}}\ m\geq 0,  \\
-F_{-m\delta-\alpha}K_{-m\delta-\alpha}&{\text{
if}}\ m< 0,\end{cases}
$$
$$
X_{m\delta-\alpha}=\begin{cases}
 -K_{m\delta-\alpha}^{-1}E_{m\delta-\alpha}&{\text{
if}}\ m> 0,\cr
F_{-m\delta+\alpha}&{\text{
if}}\ m\leq 0,\end{cases}$$
($\bar\Omega(X_{m\delta\pm\alpha})=X_{-m\delta\mp\alpha}$).

Equivalently
$$X_{\beta_r^+}=\begin{cases}
 E_{\beta_r}&{\text{
if}}\ r\leq 0\cr-F_{\beta_r}K_{\beta_r}&{\text{
if}}\ r\geq 1,\end{cases}\ \ \ X_{\beta_r^-}=\begin{cases}
 F_{\beta_r}&{\text{
if}}\ r\leq 0\cr-K_{\beta_r}^{-1}E_{\beta_r}&{\text{
if}}\ r\geq 1.
\end{cases}$$

\begin{thm}[\cite{CDFM13}]\label{mainresult}
Given $\um:\mathbb Z\ni r\mapsto m_r\in\mathbb N$ such that $\#\{r\in\mathbb Z|m_r\neq 0\}<\infty$ define
$$X^-(\um)=\prod_{r\in\mathbb Z}X_{\beta_r^-}^{m_r},\ \ X^+(\um)=\prod_{r\in\mathbb Z}X_{\beta_r^+}^{m_r}$$
where one chooses a fixed ordering for the products.

Given $\ul: \Delta_+(\text{\rm im})\to\mathbb N$ such that $\#\{(r\delta,i)\in \Delta_+(\text{\rm im})|l_{(r\delta,i)}\neq 0\}<\infty$ define 
$$
E^{{\text{
im}}}(\ul)=\prod_{(r\delta,i)\in  \Delta_+(\text{\rm im})}E_{(r\delta,i)}^{l_{(r\delta,i)}},\ \ 
F^{{\text{
im}}}(\ul)=\bar\Omega(E^{{\text{
im}}}(\ul)),
$$
where $E_{(r\delta,i)}=E^{(i)}_{r\delta}$.
Then the set 
\begin{equation}\label{imaginarypbw}
\{X^-(\um)F^{{\text{
im}}}(\ul)K_{\alpha}D^r\gamma^{s/2}E^{{\text{im}}}(\ul^{\prime})X^+(\um^{\prime})\},\quad r,s\in\mathbb Z,\quad \alpha\in Q_0
\end{equation}
is a basis of $U_q(\hat{\mathfrak g})$. 
\end{thm}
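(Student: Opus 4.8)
The plan is to obtain \thmref{mainresult} by comparing the $S$-adapted collection \eqref{imaginarypbw} with Beck's convex PBW basis $E_-HE_+$ recalled above, showing that the two are related by a unitriangular change inside each weight space. I would grade $U_q(\widehat{\mathfrak g})$ by the root lattice $Q$: every monomial in \eqref{imaginarypbw} and every Beck monomial is a weight vector for the adjoint action of $U_q^0$, and each $Q$-weight space is spanned by finitely many monomials of either kind, so it suffices to exhibit an invertible transition matrix weight space by weight space. As in the work of Damiani \cite{Dam98} and Gavarini \cite{Gav99}, the cleanest route is to replace the hard linear independence by a statement about an associated graded algebra, but the underlying bookkeeping is the same, so I describe the comparison directly.

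The first observation is that the modified root vectors are, by their very definition, Beck's real root vectors up to a unit and a Cartan factor: for $r\le 0$ one has $X_{\beta_r^+}=E_{\beta_r}$ and $X_{\beta_r^-}=F_{\beta_r}$, while for $r\ge 1$ one has $X_{\beta_r^+}=-F_{\beta_r}K_{\beta_r}$ and $X_{\beta_r^-}=-K_{\beta_r}^{-1}E_{\beta_r}$. Hence in an ordered product $X^-(\um)F^{\mathrm{im}}(\ul)K_\alpha D^r\gamma^{s/2}E^{\mathrm{im}}(\ul')X^+(\um')$ the positive real factors coming from $X^-(\um)$ are indexed by $r\ge 1$ and those from $X^+(\um')$ by $r\le 0$, and dually on the negative side; since these index ranges are disjoint, the bare real-root content of the product already determines $\um$ and $\um'$, while $\ul,\ul',\alpha,r,s$ are read off from the imaginary and Cartan content. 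This is the source of the required injectivity of leading terms.

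To produce those leading terms I would straighten the product into Beck's order $E_-HE_+$. First sweep every $K_\beta$ arising from the $r\ge 1$ modified vectors into the central Cartan block using the exact relations $K_\beta x^\pm_{i,k}K_\beta^{-1}=q^{\pm(\beta|\alpha_i)}x^\pm_{i,k}$ and $[K_\beta,h_{i,l}]=0$; this contributes only the units $\pm q^{\bullet}$. Then reorder the real factors by \eqref{convexity}, move the positive real vectors sitting in the left block past the negative ones on the right via the Drinfeld bracket \eqref{xx}, and commute the imaginary generators through using that they commute among themselves by \eqref{hs} and quasi-commute with the $x^\pm_{j,l}$ by \eqref{axcommutator}. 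Every such move returns the correctly ordered leading monomial with a unit coefficient, together with correction terms that are strictly smaller: the corrections from \eqref{xx} and \eqref{axcommutator} have strictly fewer root-vector factors, and those from \eqref{convexity} involve only roots strictly between the two being swapped. Thus, with respect to the filtration by total number of real factors refined by the convex order of \secref{drinfeldgen}, the matrix expressing \eqref{imaginarypbw} in Beck's basis is triangular with unit diagonal, hence invertible on each weight space; since $E_-HE_+$ is a basis, so is \eqref{imaginarypbw}.

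The main obstacle is exactly the choice of filtration that makes this triangular. One must check simultaneously that the length-reducing corrections from \eqref{xx} and \eqref{axcommutator} and the convex-order corrections from \eqref{convexity} are all well-founded below a single partial order, and that sweeping the $K_\beta$ factors out of the two real blocks and through the imaginary block does not generate genuinely new leading terms linking the $X^+$ and $X^-$ parts beyond the $\psi,\phi$ contributions already accounted for by \eqref{xx}. Once the imaginary block is separated off --- its generators commute among themselves and only shift the imaginary index of the $x^\pm_{j,l}$, strictly lowering length --- the remaining comparison reduces cleanly to Beck's theorem.
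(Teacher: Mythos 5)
The paper does not prove \thmref{mainresult}: it is imported verbatim from \cite{CDFM13}, so there is no in-house argument to measure your sketch against. That said, your strategy --- express each monomial of \eqnref{imaginarypbw} in Beck's convex PBW basis $E_-HE_+$, check that the leading Beck monomial determines $(\um,\ul,\alpha,r,s,\ul',\um')$ injectively, and conclude by unitriangularity --- is the natural one and is consistent with how such statements are established in \cite{Dam98}, \cite{Gav99} and in \cite{CDFM13} itself. Your observation that the index ranges $r\le 0$ and $r\ge 1$ separate the positive from the negative real-root content of $X^-(\um)$ and $X^+(\um')$, so that $\um$ and $\um'$ (and then $\alpha,r,s$ from the Cartan part) are recoverable from the bare Beck monomial, is correct and is the right injectivity mechanism.

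As a proof, however, the sketch has genuine gaps. First, the claim that each $Q$-weight space of $U_q(\widehat{\mathfrak g})$ is spanned by finitely many monomials is false: already the weight-zero space contains $F_\beta E_\beta$ for every positive root $\beta$, as well as the entire Cartan subalgebra. This is repairable (linear independence is a statement about finite subsets, and spanning follows by inverting the unitriangular relation along a well-founded order), but the repair presupposes exactly the well-foundedness you have not supplied. Second, relation \eqnref{xx} governs only the commutators of the Drinfeld generators, i.e.\ root vectors attached to $\pm\alpha_i+k\delta$ with $\alpha_i$ simple; moving $E_\beta$ past $F_{\beta'}$ for non-simple real roots is not covered by \eqnref{xx} and requires the general Beck--Damiani commutation results. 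Third --- and you concede this yourself --- the single partial order that simultaneously dominates the length-reducing corrections from \eqnref{xx} and \eqnref{axcommutator}, the convex-betweenness corrections from \eqnref{convexity} (which can \emph{increase} the number of factors), and the Cartan sweeps is asserted rather than constructed. Since that order is the entire content of the triangularity claim, what you have is a correct plan with the decisive step left open rather than a proof.
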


\section{Imaginary Verma Modules}
The algebra $\g$ has a triangular decomposition
$\g = \widehat{\mathfrak g}_{-S} \oplus \widehat{\mathfrak h}\oplus \widehat{\mathfrak g}_S$, where
$\widehat{\mathfrak g}_S = \oplus_{\alpha\in S}\widehat{\mathfrak g}_{\alpha}$ and $S$ is defined in \secref{partition}.
Let $U(\widehat{\mathfrak g}_S)$ (resp. $U(\widehat{\mathfrak g}_{-S})$) denote the universal enveloping
algebra of $\widehat{\mathfrak g}_S$ (resp. $\widehat{\mathfrak g}_{-S}$).

Let $\lambda \in P$, where $P$ is the weight lattice of $\g$.
A weight (with respect to $\widehat{\mathfrak h}$) $U(\g)$-module $V$ is
called an $S$-highest weight module with highest weight $\lambda$ if there
is some nonzero vector $v\in V_{\lambda}$ such that
\begin{enumerate}[(i).]
\item $u^+ \cdot v = 0$ for all $u^+ \in   \widehat{\mathfrak g}_{S}$;
\item  $V = U(\g)\cdot v$.
\end{enumerate}

Let $\lambda \in P$.  We make $\mathbb C$ into a 1-dimensional
$U(\widehat{\mathfrak g}_{S} \oplus\widehat{ \mathfrak h})$-module by picking a generating
vector $v$ and setting
$(x+h)\cdot v = \lambda(h)v$, for all $ x\in \widehat{\mathfrak g}_{S}, h \in \widehat{\mathfrak h}$.
The induced module
$$
M(\lambda) = U(\g) \otimes_{U(\widehat{\mathfrak g}_{S} \oplus \widehat{ \mathfrak h})}\mathbb Cv = U(\widehat{\mathfrak g}_{-S})\otimes \mathbb C v
$$
is called the {\it imaginary Verma module} with
$S$-highest weight $\lambda$.
Imaginary Verma modules are in many ways similar to ordinary
Verma modules except they contain both finite and
infinite-dimensional weight spaces.  They
were studied in \cite{MR95a:17030}, from which we
summarize.

\begin{prop}[\cite{MR95a:17030}, Proposition 1, Theorem 1]\label{irreducibilitythm}
Let $\lambda \in P$, and let $M(\lambda)$ be the imaginary Verma module
of $S$-highest weight $\lambda$.  Then $M(\lambda)$ has the following properties.
\begin{enumerate}[(i).]
\item The module $M(\lambda)$ is a free $U(\widehat{\mathfrak g}_{-S})$-module of rank 1
generated by the $S$-highest weight vector $1 \otimes 1$ of weight $\lambda$.
\item  $M(\lambda)$ has a unique maximal submodule.
\item  Let $V$ be a $U(\g)$-module generated by some
$S$-highest weight vector $v$ of weight $\lambda$.  Then there exists a
unique surjective homomorphism $\phi:M(\lambda) \mapsto V$ such
that $\phi(1\otimes  1) = v$.
\item  $\dim M(\lambda)_{\lambda} = 1$.  For any $\mu = \lambda - k\delta$, $k$
a positive integer, $0< \dim M(\lambda)_{\mu} < \infty$.  If
$\mu \neq \lambda - k \delta$ for any integer $k \ge 0$ and
$\dim M(\lambda)_{\mu} \neq 0$, then $\dim M(\lambda)_{\mu} = \infty$.
\item  The module $M(\lambda)$ is irreducible if and only if
$\lambda(c) \neq 0$.
\end{enumerate}
\end{prop}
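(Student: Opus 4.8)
The statement collects the basic structural facts about $M(\lambda)$, and I would organize the proof along the triangular decomposition $\g=\g_{-S}\oplus\widehat{\mathfrak h}\oplus\g_S$. Part (i) is immediate from the Poincar\'e--Birkhoff--Witt theorem: a PBW basis of $U(\g)$ adapted to this decomposition gives $U(\g)\cong U(\g_{-S})\otimes U(\widehat{\mathfrak h})\otimes U(\g_S)$ as vector spaces, and since $\g_S$ annihilates the generator $v=1\otimes 1$ while $\widehat{\mathfrak h}$ acts by the scalar $\lambda$, induction strips off the last two tensor factors, leaving $M(\lambda)=U(\g_{-S})\otimes\mathbb C v$ free of rank one. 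Part (iii) is the universal property of the induced module: given an $S$-highest weight vector $v'\in V$ of weight $\lambda$, the vectors $v$ and $v'$ satisfy the same defining relations (killed by $\g_S$, eigenvector of weight $\lambda$ for $\widehat{\mathfrak h}$), so $u\otimes 1\mapsto u\cdot v'$ is a well-defined homomorphism, surjective because $V=U(\g)v'$. For (ii), I would observe that $M(\lambda)_\lambda=\mathbb C v$ is one-dimensional, so a submodule is proper exactly when its $\lambda$-weight space vanishes; since weight spaces are additive over sums of submodules, the sum of all proper submodules again has trivial $\lambda$-weight space and is therefore the unique maximal submodule.

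For (iv) the key point is that the negative set $-S$ consists of the real roots $-\alpha+m\delta$ ($\alpha\in\Delta_{0,+}$, $m\in\mathbb Z$) together with the imaginary roots $-j\delta$ ($j>0$), and in $-\alpha+m\delta$ the finite part $-\alpha$ is strictly negative. Hence to reach the weight $\mu=\lambda-k\delta$, whose finite part is zero, no real root vector can occur (a nonempty sum of elements of $-\Delta_{0,+}$ is never zero), so $M(\lambda)_{\lambda-k\delta}$ is spanned by ordered monomials in the imaginary root vectors from $\g_{-j\delta}=\mathfrak h\otimes t^{-j}$ of total degree $-k\delta$; there are only finitely many such monomials (partitions of $k$, each part carrying one of $N$ basis vectors), and they are linearly independent by (i), giving $0<\dim M(\lambda)_{\lambda-k\delta}<\infty$, while $\dim M(\lambda)_\lambda=1$. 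If instead $\mu\neq\lambda-k\delta$ has nonzero finite root part, then at least one real root vector $-\gamma+m\delta$ must appear; since such vectors exist for all $m\in\mathbb Z$ with the finite part fixed, one may raise $m$ arbitrarily and compensate with additional imaginary factors, producing infinitely many linearly independent monomials of weight $\mu$, whence $\dim M(\lambda)_\mu=\infty$.

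Part (v) is the substantive one. For the easy direction I compute, when $\lambda(c)=0$, that $w=(h\otimes t^{-1})v$ (any $0\neq h\in\mathfrak h$) is an $S$-highest weight vector of weight $\lambda-\delta$: for a real root vector $e_\alpha\otimes t^n\in\g_S$ one has $(e_\alpha\otimes t^n)w=[e_\alpha\otimes t^n,h\otimes t^{-1}]v=-\alpha(h)(e_\alpha\otimes t^{n-1})v=0$, since $e_\alpha\otimes t^{n-1}$ again lies in $\g_S$ and kills $v$; and for an imaginary vector $h'\otimes t^j\in\g_S$ ($j>0$) one has $(h'\otimes t^j)w=[h'\otimes t^j,h\otimes t^{-1}]v=\delta_{j,1}(h'|h)\,\lambda(c)\,v=0$. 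Thus $w$ generates a proper nonzero submodule and $M(\lambda)$ is reducible. The hard direction is showing that $\lambda(c)\neq 0$ forces irreducibility, and this is where I expect the real work to lie. The plan is to exploit the Heisenberg subalgebra spanned by $\{\mathfrak h\otimes t^n\mid n\neq 0\}\oplus\mathbb C c$: when $\lambda(c)\neq 0$ the bracket $[h'\otimes t^j,h\otimes t^{-j}]=j(h'|h)c$ acts by the nondegenerate scalar $j(h'|h)\lambda(c)$, so this subalgebra acts irreducibly on the Fock space it generates from $v$. One then argues that from an arbitrary nonzero weight vector the positive real root vectors in $\g_S$ can be applied to lower the finite part back to zero without annihilating the vector, reducing the problem to the imaginary Fock space, on which the Heisenberg action returns a nonzero multiple of $v$; hence every nonzero submodule contains $v$ and equals $M(\lambda)$. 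The main obstacle is precisely this last reduction: one must verify that the real raising operators do not kill the chosen leading monomial, which requires a careful leading-term analysis using the convexity relation \eqnref{convexity} rather than a formal manipulation.
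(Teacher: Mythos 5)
This proposition is imported by the paper from Futorny's 1994 article \cite{MR95a:17030} (``Proposition 1, Theorem 1'') and no proof is given in the text, so there is nothing internal to compare against; I can only judge your argument on its own terms. Parts (i)--(iii) are correct and standard (PBW for the triangular decomposition $\g=\widehat{\mathfrak g}_{-S}\oplus\widehat{\mathfrak h}\oplus\widehat{\mathfrak g}_S$, the universal property of induction, and the observation that a submodule is proper iff its $\lambda$-weight space vanishes). Part (iv) is also correct: since every real root in $-S$ has finite part in $-\Delta_{0,+}$, a monomial of weight $\lambda-k\delta$ can involve only the imaginary root vectors $\mathfrak h\otimes t^{-j}$, giving the ($N$-colored partition) count, while a nonzero finite part forces a real factor $-\gamma+m\delta$ whose $m$ can be shifted and compensated by imaginary factors. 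The reducible half of (v) is also complete and correct.

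The genuine gap is the irreducibility half of (v), which you present only as a plan and which is the entire content of the theorem. The step ``the positive real root vectors can be applied to lower the finite part back to zero without annihilating the vector'' is precisely where $\lambda(c)\neq 0$ must be used \emph{outside} the Heisenberg subalgebra, and your sketch does not exhibit the mechanism. Concretely: given $0\neq u\in N_\mu$ with $\lambda-\mu$ having nonzero finite part, one inducts on the height of that finite part and applies $e_\gamma\otimes t^{m}\in\widehat{\mathfrak g}_S$ to a leading PBW term containing $f_\gamma\otimes t^{p}$; the commutator produces either a scalar $\lambda(h_\gamma)+p(e_\gamma|f_\gamma)\lambda(c)$ (when $m=-p$) or an element $h_\gamma\otimes t^{-j}$ acting freely (when $m=-p-j$, $j>0$), and it is the freedom to vary $m$ --- available only because $\lambda(c)\neq 0$ makes the scalar depend nontrivially on $p$ --- that guarantees a nonzero outcome. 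One must also check that the contributions from the other PBW monomials of the same weight cannot cancel the leading one, which is the ``careful leading-term analysis'' you defer; note that in $U(\widehat{\mathfrak g})$ this is done with the ordinary Lie bracket and PBW straightening, not with the quantum convexity formula \eqnref{convexity}, which lives in $U_q(\widehat{\mathfrak g})$ and is not the right tool here. Without these two ingredients the reduction to the Fock space, and hence the proof of (v), is incomplete.
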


\subsection{The Subalgebras $U_q(-S)$ and $U_q^-(S)$ of $U_q(\hat{\mathfrak g})$}
Let $U_q(\pm S)$ be the subalgebra of $U_q(\g)$ generated by
$\{ X_{\beta_r^\pm}\ | r\in\mathbb Z\}\cup\{E_{\pm k\delta}^{(i)}|\,1\leq i\leq N, k>0\}$,
and let
$B_q^{r}$ denote the subalgebra of $U_q(\g)$ generated
by $U_q(S)\cup U_q^0(\g)$ (the superscript $r$ is used to remind us that it is generated in part by root vectors).
Let $U_q^\pm( S)$ be the subalgebra of $U_q(\g)$ generated by
$\{ x^\pm_{ik}\ | 1\leq i\leq N, k\in\mathbb Z\}\cup\{h_{il}|\,1\leq i\leq N, l\in\pm\mathbb N^*\}$,
and let
$B_q^{d}$ denote the subalgebra of $U_q(\g)$ generated
by $U_q^+(S)\cup U_q^0(\g)$.  (The superscript $d$, is used to remind us that the respective subalgebras are generated in part by Drinfeld generators).

Let $\lambda \in P$.
A $U_q(\g)$ weight module $V_q^r$ is called an $S$-highest weight
module with highest weight $\lambda$ if there is a non-zero vector
$v \in V_q^r$ of weight $\lambda$  such that:
\begin{enumerate}[(i).]
\item $u^+\cdot v  =0$ for all $u^+ \in U_q(S) \setminus \mathbb C(q^{1/2})^*$;
\item $V_q^r = U_q(\g) \cdot v$.
\end{enumerate}

Let $\mathbb C(q^{1/2}) \cdot v$ be a 1-dimensional vector space.
Let $\lambda \in P$, and set  $X_{\beta^+_r}\cdot v=0$, for all $r\in\mathbb Z$ and $E_{k\delta}^{(i)}\cdot v=0$
for $k<0$ and $1\leq i\leq N$ ,  $K_i^{\pm 1}\cdot v = q^{\pm\lambda(h_i)}v$
($i \in I$) and
$D^{\pm 1}\cdot v = q^{\pm \lambda(d)}v$.
Define $M_q^r(\lambda)=U_q(\g)\otimes_{B_q^r} \mathbb C(q^{1/2}) v$.  Then
$M_q^r(\lambda)$ is an $S$-highest weight $U_q$-module called
the {\it quantum imaginary Verma module} with highest weight $\lambda$.
If we let $L_q^r$ be the left ideal in $U_q$ generated by $X_{\beta_r^-} $ for all $r\in\mathbb Z$ and $E_{k\delta}^{(i)} $
for $k<0$ and $1\leq i\leq N$,  $K_i^{\pm 1}- q^{\pm\lambda(h_i)}$
($i \in I$) and
$D^{\pm 1}-q^{\pm \lambda(d)}$, then $U_q/L_q^r\cong M_q^r(\lambda)$ which is induced by $1\mapsto v$.

We obtain the following refinement of \cite[Theorem 3.5]{FGM05}:
\color{black}

\begin{thm}[\cite{CDFM13}]  \label{pbwthm}As a vector space, $M_q^r(\lambda)$ has a basis consisting of the ordered monomials
\begin{equation}\label{pbw}
\{X^-(\um)F^{{\text{im}}}(\ul)v\}.
\end{equation}
In particular,  $M_q^r(\lambda)$ is free as a module over $U_q(-S)$.
\end{thm}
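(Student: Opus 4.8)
The plan is to read off the basis of $M_q^r(\lambda)$ from the global PBW basis\eqnref{imaginarypbw} of $U_q(\g)$ furnished by \thmref{mainresult}, using the realization $M_q^r(\lambda)\cong U_q(\g)\otimes_{B_q^r}\mathbb C(q^{1/2})v$ as an induced module. The first task is to split the monomials in\eqnref{imaginarypbw} into subalgebra factors: I would verify that $\{X^-(\um)F^{\text{im}}(\ul)\}$ is a basis of $U_q(-S)$ and that $\{K_\alpha D^r\gamma^{s/2}E^{\text{im}}(\ul')X^+(\um')\}$ is a basis of $B_q^r$. Linear independence of either family is automatic, since each is a subfamily of the basis of \thmref{mainresult}. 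For spanning, I would reorder an arbitrary product of generators of $U_q(-S)$ (resp.\ $B_q^r$) into the prescribed PBW order by repeated use of the Levendorski--So{\u\i}belman convexity formula\eqnref{convexity}; because the partition $\D=S\cup(-S)$ is closed, every intermediate root vector produced lies again in $U_q(-S)$ (resp.\ in $B_q^r$), so the straightening never leaves the subalgebra.

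Granting these two facts, \thmref{mainresult} says exactly that multiplication is a linear isomorphism $U_q(-S)\otimes_{\mathbb C(q^{1/2})}B_q^r\xrightarrow{\sim}U_q(\g)$, i.e.\ $U_q(\g)$ is free as a right $B_q^r$-module on the ordered monomials $\{X^-(\um)F^{\text{im}}(\ul)\}$. The remaining step is then purely formal. Tensoring over $B_q^r$ with the one-dimensional module $\mathbb C(q^{1/2})v$, on which $B_q^r$ acts through the character given by $\lambda$ on $U_q^0$ and by the augmentation on $U_q(S)$, collapses the $B_q^r$-factor and yields
\[
M_q^r(\lambda)=U_q(\g)\otimes_{B_q^r}\mathbb C(q^{1/2})v\;\cong\;U_q(-S)\otimes_{\mathbb C(q^{1/2})}\mathbb C(q^{1/2})v\;\cong\;U_q(-S)
\]
as left $U_q(-S)$-modules. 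Hence the images $\{X^-(\um)F^{\text{im}}(\ul)v\}$ form a basis of $M_q^r(\lambda)$, and $M_q^r(\lambda)$ is free of rank one over $U_q(-S)$, giving both assertions at once. Concretely, each PBW monomial of\eqnref{imaginarypbw} projects to zero unless its raising part $E^{\text{im}}(\ul')X^+(\um')$ is trivial, in which case the Cartan factor $K_\alpha D^r\gamma^{s/2}$ scales $v$ by a nonzero factor determined by $\lambda$.

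The step I expect to be the main obstacle is the spanning half of the first task, namely that the ordered monomials genuinely exhaust $U_q(-S)$ and $B_q^r$ rather than merely sitting inside them. The subtle point is that commuting two negative real root vectors $X_{\beta_r^-}$ past one another can, through\eqnref{convexity}, generate intermediate \emph{imaginary} root vectors; one must check that these are absorbed into the $F^{\text{im}}(\ul)$-factors and that the ordering $\{-\alpha-n\delta\}<\{-k\delta\}<\{\alpha-k\delta\}$ fixed in \secref{drinfeldgen} is consistent with the product ordering used to define $X^-(\um)F^{\text{im}}(\ul)$. Once closedness of $-S$ rules out the appearance of any positive-root or extraneous Cartan factor, the convexity formula confines all straightening inside the intended subalgebra, and the tensor-product decomposition above follows.
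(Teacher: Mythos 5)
The paper does not actually prove \thmref{pbwthm}: it is quoted from \cite{CDFM13} (as is \thmref{mainresult}), so there is no in-text argument to compare yours against. Your overall strategy --- factor the global PBW basis \eqnref{imaginarypbw} as (basis of $U_q(-S)$)$\times$(basis of $B_q^r$), conclude that $U_q(\g)$ is free as a right $B_q^r$-module on the monomials $X^-(\um)F^{\text{im}}(\ul)$, and then apply $-\otimes_{B_q^r}\mathbb C(q^{1/2})v$ --- is the natural one and is surely the shape of the argument in the cited reference. The final tensor-product step is indeed purely formal and you execute it correctly.

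The gap is exactly where you suspect it, and it is not closed by the tools you invoke. First, the Levendorski--So{\u\i}belman formula \eqnref{convexity} straightens a product $E_\beta E_\alpha$ of two \emph{positive-root} vectors (or two negative ones), whereas $U_q(-S)$ is generated by the modified vectors $X_{\beta_r^-}$, which equal $F_{\beta_r}$ for $r\le 0$ but $-K_{\beta_r}^{-1}E_{\beta_r}$ for $r\ge 1$. The products you must reorder therefore mix negative-root vectors with Cartan-twisted positive-root vectors, and commutators of the type $[E_{-\alpha+k\delta},F_{\alpha+m\delta}]$ are governed by \eqnref{xcommutator}, not \eqnref{convexity}; a priori they produce $\psi$- and $\phi$-terms, i.e.\ Cartan and imaginary contributions. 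Second, ``closedness of $-S$'' constrains which root spaces can occur in a straightened product, but it cannot exclude weight-zero factors $K_\alpha\gamma^{s/2}$, since those are invisible to the $Q$-grading; your own weight bookkeeping ($A=A'-A''$ does not force $A''=0$) shows the grading alone is insufficient. Ruling out this leakage --- equivalently, proving that the ordered monomials $X^-(\um)F^{\text{im}}(\ul)$ \emph{span} $U_q(-S)$ and that the corresponding monomials span $B_q^r$ --- is precisely the technical content of \cite{CDFM13} (where one identifies $U_q(\pm S)$ with the Drinfeld subalgebras $U_q^{\pm}(S)$, in which the relations \eqnref{hs}--\eqnref{Serre} visibly close). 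So your write-up is a correct skeleton whose load-bearing lemma is asserted rather than proved; as a self-contained proof it is incomplete, though as a reduction of \thmref{pbwthm} to \thmref{mainresult} plus that spanning lemma it is sound.
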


Recall the notation from \secref{drinfeldgen}.
Let $M^d_q(\lambda)=U_q/L_q^d$ where $L_q^d$ is the left ideal generated by the Drinfeld generators $x^+_{ik}$, $h_{il}$, $i\in  I_0$, $k\in\mathbb Z$, $l>0$, together with $K_i^{\pm 1}-q^{\pm \lambda(h_i)}$, $\gamma^{\pm 1/2}-q^{\pm \lambda(c)/2}$ and $D^{\pm 1}-q^{\pm \lambda(d)}$.    Let $B_q^d$ be the subalgebra of $U_q$ generated by $U^+_q(S)$ and $U^0_q(\hat{\mathfrak g})$ and let $\mathbb C(q^{1/2})_\lambda$ be the one dimensional $B_q^d$-module where $x^+_{ik}1=0$, $h_{il}1=0$, $K_i^{\pm 1}1=q^{\pm \lambda(h_i)}1$, $i\in  I_0$, $k\in\mathbb Z$, $l>0$,  $\gamma^{\pm 1/2}1=q^{\pm \lambda(c)/2}1$ and $D^{\pm 1}1=q^{\pm \lambda(d)}1$.  Note that $B_q^d\subseteq B_q^r$ as $E_{\alpha_i+k\delta}=o(i)^kx^+_{ik}$ for $k\geq0$,  $F_{-\alpha_i-k\delta}=-o(i)^kK_i \gamma^kx^+_{ik}$ for $k<0$, and $E_{k\delta}^{(i)}=\gamma^{-k/2}h_{ik}$ (see \eqnref{rootvector1} and  \eqnref{rootvector4}).

By universal mapping properties of quotients and the tensor products one has
$$
M_q^d(\lambda)\cong U_q\otimes _{B^d_q}\mathbb C(q^{1/2})_\lambda.
$$
Since $L^d_q\subset L^r_q$, there is a surjective $U_q$-module homomorphism $\pi: M^d_q(\lambda)\to M_q^r(\lambda)$. 
\begin{cor}  [\cite{CDFM13}] \label{cor-isom-two-imag}
$M^d_q(\lambda)$ is isomorphic to $M^r_q(\lambda)$ as $U_q$-modules. 
\end{cor}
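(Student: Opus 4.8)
The plan is to prove that the surjection $\pi\colon M_q^d(\lambda)\to M_q^r(\lambda)$ furnished by the inclusion $L_q^d\subset L_q^r$ is injective, hence an isomorphism, by a PBW spanning-set comparison. Write $1_d$ for the cyclic generator of $M_q^d(\lambda)$ and $v$ for the $S$-highest weight generator of $M_q^r(\lambda)$, so that $\pi(u\cdot 1_d)=u\cdot v$ for every $u\in U_q$. By \thmref{pbwthm} the vectors $X^-(\um)F^{\text{im}}(\ul)\,v$ form a basis of $M_q^r(\lambda)$; in particular the images $\pi\bigl(X^-(\um)F^{\text{im}}(\ul)\,1_d\bigr)=X^-(\um)F^{\text{im}}(\ul)\,v$ are linearly independent. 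Consequently, if I can show that the vectors $X^-(\um)F^{\text{im}}(\ul)\,1_d$ span $M_q^d(\lambda)$, then they are automatically linearly independent (their images are), so they form a basis that $\pi$ carries bijectively onto a basis, and $\pi$ is an isomorphism. The whole problem thus reduces to the spanning statement for $M_q^d(\lambda)$.

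To prove spanning I would invoke the global PBW basis of $U_q(\g)$ from \thmref{mainresult}: every element of $U_q$ is a linear combination of monomials $X^-(\um)F^{\text{im}}(\ul)\,K_\alpha D^r\gamma^{s/2}E^{\text{im}}(\ul')X^+(\um')$. Applying such a monomial to $1_d$, it suffices to check that the tail $K_\alpha D^r\gamma^{s/2}E^{\text{im}}(\ul')X^+(\um')$ acts on $1_d$ by a scalar that vanishes unless $\ul'=\um'=0$. The Cartan factors $K_\alpha,D^r,\gamma^{s/2}$ act by nonzero scalars by the definition of $M_q^d(\lambda)$. Since $E_{k\delta}^{(i)}=\gamma^{-k/2}h_{ik}$ with $h_{ik}\cdot 1_d=0$ for $k>0$ (recall \secref{drinfeldgen} and the generators of $L_q^d$), the rightmost factor of $E^{\text{im}}(\ul')$ kills $1_d$ whenever $\ul'\neq 0$. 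It therefore remains to verify that $X^+(\um')\cdot 1_d=0$ unless $\um'=0$, for which it is enough to show that each real root vector $X_{\beta_r^+}$ annihilates $1_d$.

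The identity $X_{\beta_r^+}\cdot 1_d=0$ is immediate for the roots $\beta_r=\alpha_i+m\delta$ attached to a simple $\alpha_i$: by \eqnref{rootvector1} and \eqnref{rootvector4} the vector $X_{\beta_r^+}$ equals $x_{im}^+$ up to a factor from $U_q^0$, and $x_{im}^+\cdot 1_d=0$ by the definition of $L_q^d$. The main obstacle is the case of a non-simple $\alpha\in\Delta_{0,+}$, where one must show that the corresponding non-simple real root vector still lies in $B_q^d$, i.e.\ in the subalgebra generated by the Drinfeld generators $x_{ik}^+$, $h_{il}$ $(l>0)$ and $U_q^0$. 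I would derive this from the Levendorski--So{\u\i}belman convexity relation \eqnref{convexity}: writing $X_{\beta_r^+}$ as a $q$-bracket of root vectors attached to strictly smaller roots in Beck's convex order and inducting on that order, every correction term is an ordered product of root vectors lying strictly between, which by the inductive hypothesis together with the identification $E_{k\delta}^{(i)}=\gamma^{-k/2}h_{ik}$ already belongs to $B_q^d$ and hence annihilates $1_d$. This yields $U_q(S)\subset B_q^d$ and so $X_{\beta_r^+}\cdot 1_d=0$ in general, completing the spanning claim and, with it, the proof that $\pi$ is an isomorphism. I expect this convexity induction---controlling the intermediate root vectors and keeping track of the $U_q^0$-factors---to be the only genuinely delicate point, everything else being a formal consequence of the two PBW theorems.
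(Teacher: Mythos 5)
The paper itself gives no proof of this corollary beyond constructing the surjection $\pi\colon M_q^d(\lambda)\to M_q^r(\lambda)$ from $L_q^d\subset L_q^r$; the statement is imported from \cite{CDFM13}. Your overall architecture is the natural one and is surely close to the intended argument: once one knows that the vectors $X^-(\um)F^{\text{im}}(\ul)\,1_d$ span $M_q^d(\lambda)$, \thmref{pbwthm} forces them to be a basis mapped bijectively by $\pi$, and the spanning claim reduces (via \thmref{mainresult} and the fact that $h_{ik}\cdot 1_d=0$ for $k>0$, $E_{k\delta}^{(i)}=h_{ik}\gamma^{-k/2}$) to the single assertion that every $X_{\beta_r^+}$ annihilates $1_d$ --- equivalently, that $L_q^r=L_q^d$. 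That reduction is correct, and you have correctly isolated the crux.

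The gap is in how you propose to close that crux. First, \eqnref{convexity} expresses the $q$-commutator $E_\beta E_\alpha-q^{(\alpha|\beta)}E_\alpha E_\beta$ as a combination of ordered monomials in \emph{intermediate} root vectors; it does not by itself express a non-simple root vector $E_\gamma$ as a polynomial in root vectors attached to ``smaller'' roots. Inverting the formula requires knowing that $E_\gamma$ occurs with nonzero coefficient in a suitable bracket $[E_\beta,E_\alpha]_q$ with $\alpha+\beta=\gamma$, which is an extra input (the Levendorski--So{\u\i}belman iterated-bracket construction, or a rank-two computation), not a consequence of convexity as stated. Second, ``induction on the convex order'' is delicate here: between the block $\{\alpha+n\delta\}$ and the block $\{-\alpha+k\delta\}$ sit infinitely many roots, the intermediate $\gamma_j$ in \eqnref{convexity} need not have smaller finite height, and for $m<0$ the element $X_{m\delta+\alpha}=-F_{-m\delta-\alpha}K_{-m\delta-\alpha}$ involves \emph{negative} root vectors and trailing $K$-factors, so the convexity relation you need is the one on the negative side with modified $q$-powers. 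Finally, ``lies in $B_q^d$ hence annihilates $1_d$'' needs the (easy but necessary) weight remark that an element of $B_q^d=U_q^+(S)\cdot U_q^0$ whose $Q_0$-weight is nonzero must lie in $(U_q^+(S))_+U_q^0$ and hence kill $1_d$. A more robust route to the crux avoids root vectors altogether: the straightening relations \eqnref{hs}--\eqnref{Serre} give $U_q=U_q^-(S)\,U_q^0\,U_q^+(S)$ as a spanning statement, so $M_q^d(\lambda)=U_q^-(S)\cdot 1_d$ and every weight of $M_q^d(\lambda)$ lies in $\lambda-Q_{0,+}+\mathbb Z\delta$; since $X_{\beta_r^+}1_d$ would have weight $\lambda+\alpha+m\delta$ with $0\neq\alpha\in Q_{0,+}$, it vanishes. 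With that substitution your proof goes through.
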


We have  immediately from \cite{FGM05}, Corollary 6.5. 
\begin{cor}  
$M^d_q(\lambda)$ is irreducible if and only if $\lambda(c)\neq 0$. 
\end{cor}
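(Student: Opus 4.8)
The plan is to reduce the statement entirely to results already established. By \corref{cor-isom-two-imag} there is an isomorphism $M_q^d(\lambda)\cong M_q^r(\lambda)$ of $U_q(\g)$-modules, and irreducibility is an isomorphism invariant; so it suffices to decide irreducibility of the root-vector version $M_q^r(\lambda)$. That criterion is exactly \cite[Corollary 6.5]{FGM05}, which asserts that $M_q^r(\lambda)$ is irreducible if and only if $\lambda(c)\neq 0$. Transporting this back along the isomorphism of \corref{cor-isom-two-imag} then yields the corollary.

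To explain why the criterion is governed by $\lambda(c)$, mirroring the classical statement \propref{irreducibilitythm}(v), I would examine the action of the imaginary part. On $M_q^r(\lambda)$ the central element $\gamma$ acts by the scalar $q^{\lambda(c)}$, so in the bracket \eqnref{hs} the structure constant carries the factor $\gamma^k-\gamma^{-k}$. If $\lambda(c)\neq 0$ this factor is nonzero for every $k\neq 0$, so the imaginary generators $h_{ik}$ satisfy nondegenerate Heisenberg-type relations of nonzero level; combined with the freeness of the action of the negative real root vectors $X_{\beta_r^-}$ coming from \thmref{pbwthm}, a standard highest-weight argument shows that any nonzero submodule must contain the highest weight vector and therefore equal the whole module.

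Conversely, when $\lambda(c)=0$ one has $\gamma=1$, the factor $\gamma^k-\gamma^{-k}$ vanishes, and the $h_{ik}$ commute; the module then degenerates and admits the proper quotient $\tilde M_q(\lambda)$ mentioned in the introduction, exhibiting a nonzero proper submodule and hence reducibility. This recovers, at the quantum level, precisely the dichotomy of \propref{irreducibilitythm}(v).

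The step requiring genuine care --- and the one I regard as the main obstacle --- is the bookkeeping that identifies the module of \cite[Corollary 6.5]{FGM05} with $M_q^r(\lambda)$ as defined here: one must confirm that both use the same partition $S$ of \secref{partition} and the same normalization of $K_i$, $\gamma^{1/2}$, $D$ on the highest weight vector, so that the cited irreducibility criterion applies verbatim. Once that identification is in place no further computation is needed, since \corref{cor-isom-two-imag} already furnishes the passage between $M_q^d(\lambda)$ and $M_q^r(\lambda)$, and the corollary follows immediately.
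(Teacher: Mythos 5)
Your argument is exactly the paper's: the corollary is obtained by transporting the irreducibility criterion of \cite[Corollary 6.5]{FGM05} for $M_q^r(\lambda)$ across the isomorphism $M_q^d(\lambda)\cong M_q^r(\lambda)$ of Corollary~\ref{cor-isom-two-imag}. The additional heuristic discussion of the Heisenberg relations and the reduced quotient is fine but not needed; the proof matches the paper's.
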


\subsection{Reduced imaginary Verma modules}
Let $\lambda\in P$.  
Suppose now that $\lambda(c)=0$. Then $\gamma^{\pm \frac{1}{2}}$ acts on $M_q(\lambda)$ by $1$. 
Denote by $J^q(\lambda)$ the left ideal of
$U_q=U_q(\hat{\mathfrak g})$ generated by  $L_q^d$  and  $h_{il}$ for
all $l$ and all $i\in \dot I$.

 Set
$$
\tilde{M}_q(\lambda)=U_q/J^q(\lambda).
$$
Then $\tilde{M}_q(\lambda)$ is a homomorphic image of
 $M^d_q(\lambda)$ which we call the \emph{reduced imaginary Verma
 module}. The module $\tilde{M}_q(\lambda)$ has a $P$-gradation:
 $$
 \tilde{M}_q(\lambda)=\sum_{\xi\in P}\tilde{M}_q(\lambda)_{\xi},
  $$
   where $\tilde{M} _q(\lambda)_\xi$ is spanned by
 $$
 E_{-\beta_1-m_1\delta}\cdots E_{-\beta_l-m_l\delta}E_{-\gamma_1+k_1\delta}\cdots E_{-\gamma_r+k_r\delta}
        \quad m_i\geq 0,k_i>0\enspace \beta_i,\gamma_i\in \dot\Delta_+ \cdot 1$$
for
$$
\xi=-\sum_{i=1}^l\beta_i-\sum_{j=1}^r \gamma_j+\left(-\sum_{i=1}^lm_i+\sum_{j=1}^r k_j\right)\delta.
$$

Applying \cite[Theorem 7.1]{FGM05} and Corollary~\ref{cor-isom-two-imag} we obtain 

\begin{thm}\label{thm-imag-irred}
Let $\lambda\in P$ such that $\lambda(c)=0$. Then module $\tilde{M}_q(\lambda)$
is simple if and only if $\lambda(h_i)\neq 0$ for all $i\in \dot I$.
\end{thm}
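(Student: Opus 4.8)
The plan is to control the submodule structure of $\tilde M_q(\lambda)$ by a contravariant (Shapovalov-type) bilinear form and to locate exactly where the condition $\lambda(h_i)\neq 0$ enters. Writing $v$ for the $S$-highest weight vector (the image of $1$) and letting $\omega$ be the standard $\mathbb C(q^{1/2})$-linear antiautomorphism with $\omega(E_i)=F_i$, $\omega(F_i)=E_i$ fixing $U_q^0(\g)$, I would define $\langle\,\cdot\,,\,\cdot\,\rangle$ by letting $\langle av,bv\rangle$ be the coefficient of $v$ in $\omega(a)b\cdot v$. This is well defined since $\tilde M_q(\lambda)_\lambda=\mathbb C(q^{1/2})v$, it is contravariant, and distinct weight spaces of the $P$-gradation displayed before the theorem are orthogonal. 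A standard argument then shows that every proper submodule lies in the radical $\mathrm{rad}$ of the form (raise any homogeneous vector $w$ of a proper submodule $W$ to weight $\lambda$ by a suitable $\omega(a)$ and use $\tilde M_q(\lambda)_\lambda\cap W=0$), so $\tilde M_q(\lambda)$ is simple if and only if the form is nondegenerate on every weight space.

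For the ``only if'' direction I would use a single rank-one computation. By the basis of \thmref{pbwthm} the weight space $\tilde M_q(\lambda)_{\lambda-\alpha_i}$ is one dimensional, spanned by $E_{-\alpha_i}\cdot v=x^-_{i0}\cdot v=F_i\cdot v$ (a factor $E_{-\gamma+k\delta}$ would force a positive multiple of $\delta$ into the weight and is excluded). Using $\omega(F_i)=E_i$, the relation $E_iF_i-F_iE_i=(K_i-K_i^{-1})/(q_i-q_i^{-1})$, and $E_i\cdot v=0$, one finds
$$
\langle F_i\cdot v,\,F_i\cdot v\rangle=\frac{q^{\lambda(h_i)}-q^{-\lambda(h_i)}}{q_i-q_i^{-1}},
$$
which vanishes precisely when $\lambda(h_i)=0$. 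Hence if some $\lambda(h_i)=0$ then $F_i\cdot v$ is a nonzero radical vector, $\mathrm{rad}\neq 0$, and $\tilde M_q(\lambda)$ is not simple. (Equivalently, the Drinfeld relation \eqnref{xx} at the zero mode shows $x^+_{ik}\cdot(F_i\cdot v)=0$ for all $k$ exactly when $\lambda(h_i)=0$.)

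For the harder ``if'' direction I would assume $\lambda(h_i)\neq 0$ for all $i\in\dot I$ and show $\mathrm{rad}=0$. The efficient route is to transport the known criterion through \corref{cor-isom-two-imag}: that corollary identifies the Drinfeld module $M^d_q(\lambda)$ with the root-vector module $M^r_q(\lambda)$, and since $E^{(i)}_{k\delta}=\gamma^{-k/2}h_{ik}$ with $\gamma^{\pm1/2}$ invertible, the defining ideal of $\tilde M_q(\lambda)$ (generated in addition by the $h_{il}$) corresponds to the ideal generated by the imaginary root vectors. Thus the isomorphism descends to the reduced modules and \cite[Theorem 7.1]{FGM05} supplies nondegeneracy under the stated hypothesis. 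Conceptually, this nondegeneracy is forced by a factorization of the form along the real-root directions: the convexity formula \eqnref{convexity} together with the PBW basis of \thmref{pbwthm} should let one reduce the Gram pairing of each weight space, up to a triangular change of basis dictated by the root ordering, to rank-one pairings at the roots $\alpha_i+k\delta$, each proportional to $q^{\lambda(h_i)}-q^{-\lambda(h_i)}$, hence nonzero exactly when every $\lambda(h_i)\neq 0$.

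I expect the ``if'' direction to be the main obstacle. Unlike ordinary Verma modules, the reduced imaginary Verma module retains infinite-dimensional weight spaces coming from the loop (real-root) directions: for example $\tilde M_q(\lambda)_{\lambda-\alpha-\beta}$ already contains the infinite family $E_{-\alpha-m\delta}\,E_{-\beta+m\delta}\cdot v$ with $m>0$. Consequently there is no finite Shapovalov determinant to evaluate, and the genuine content is to make the factorization of the contravariant form across these loop directions rigorous, so that nondegeneracy is governed entirely by the finitely many numbers $\lambda(h_i)$. This is precisely the work packaged in \cite[Theorem 7.1]{FGM05}, which \corref{cor-isom-two-imag} allows us to apply to the present model.
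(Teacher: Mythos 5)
Your proposal is correct and, at its core, takes the same route as the paper: the paper's entire proof is the observation that Corollary~\ref{cor-isom-two-imag} identifies the two models of the imaginary Verma module so that \cite[Theorem 7.1]{FGM05} applies directly, which is exactly where you ultimately defer for the hard ``if'' direction. The additional scaffolding you supply (the contravariant form, the reduction of simplicity to nondegeneracy, and the rank-one computation $\langle F_i v,F_i v\rangle=(q^{\lambda(h_i)}-q^{-\lambda(h_i)})/(q_i-q_i^{-1})$ for the ``only if'' direction) is sound and consistent with the cited result, but it is not part of the paper's argument.
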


\section{$\Omega$-operators and their relations}
 Recall that the Schur polynomials $S_k(\mathbf x)$, $k\in\mathbb Z$ are defined to be polynomials in $\mathbb C[x_1,x_2,\dots]$ given by 
$$
\sum_{k\in\mathbb Z}^\infty S_k(x)z^k=\exp\left(\sum_{l=1}^\infty x_lz^l\right)
$$

Consider now the subalgebra $ \mathcal N_q^-$, generated by $\gamma^{\pm1/2}$, and $x_{i,l}^-$, $l\in\mathbb Z$, $1\leq i\leq N$. Note that the corresponding relations \eqnref{Serre} hold in $ \mathcal N_q^-$.

\begin{lem} \label{Plemma} Fix $k\in\mathbb Z$ and $1\leq i\leq N$.  Then for any $P\in \mathcal N_q^-$, there exists unique
$$
Q(i,k,p),R(i,k,r)\in \mathcal N_q^-,\quad p,r\in\mathbb Z,
$$
  such that
\begin{equation}
[x^+_{i,k},P]=K_i\sum \frac{   S_{i,p}^+  Q(i,k,p)}{q_i-q^{-1}_i}+K_i^{-1}\sum \frac{S_{i,r}^- R(i,k,r)}{q_i-q_i^{-1}}.
\end{equation}
where 
\begin{align*}
S_{i,k}^+&:=S_k((q_i-q_i^{-1})E_{\delta}^{(i)},(q_i-q_i^{-1})E_{2\delta}^{(i)},\dots ) ,\\
   S_{i,k}^-&:=S_k((q_i-q_i^{-1})E_{-\delta}^{(i)},(q_i-q_i^{-1})E_{-2\delta}^{(i)},\dots ).
\end{align*}
Note that the $S_{i,k}$ have degree $k$ with respect to $D$. 
\end{lem}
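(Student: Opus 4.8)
For uniqueness and existence I would argue on two separate footings: existence by induction on the length of a monomial in the $x^-_{j,l}$, exploiting that $[x^+_{i,k},-]$ is a derivation; uniqueness from the PBW basis of \thmref{mainresult} together with the $D$-grading. First I record the base case. The defining relation \eqnref{imaginaryrootvectordef} says precisely that, up to the central factors $K_i^{\pm1}$ and powers of $\gamma^{\pm1/2}$, the Drinfeld elements $\psi_{i,m}$ and $\phi_{i,-m}$ are the Schur elements $S^+_{i,m}$ and $S^-_{i,m}$; explicitly $\psi_{i,m}=K_iS^+_{i,m}\gamma^{m/2}$ for $m\ge 0$ and $\phi_{i,-m}=K_i^{-1}S^-_{i,m}\gamma^{-m/2}$ for $m\ge 0$, while $S^\pm_{i,p}=0$ for $p<0$ and $S^\pm_{i,0}=1$. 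Feeding this into the Drinfeld commutator \eqnref{xcommutator} shows that for $P=x^-_{i,l}$ the bracket $[x^+_{i,k},x^-_{i,l}]$ is already of the asserted shape (and it vanishes for $P=x^-_{j,l}$ with $j\ne i$), which is the base case. Since $\gamma^{\pm1/2}$ is central and lies in $\mathcal N_q^-$, and $[x^+_{i,k},-]$ is additive, it suffices to treat monomials $P=x^-_{j_1,l_1}\cdots x^-_{j_m,l_m}$.

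For the inductive step write $P=x^-_{j,l}P'$ with $P'\in\mathcal N_q^-$ of one lower length, and use the derivation identity $[x^+_{i,k},x^-_{j,l}P']=[x^+_{i,k},x^-_{j,l}]\,P'+x^-_{j,l}\,[x^+_{i,k},P']$. The first term is handled by the base case: it is $0$ if $j\ne i$, and otherwise equals $K_i\frac{S^+_{i,k+l}}{q_i-q_i^{-1}}(\gamma^{a}P')-K_i^{-1}\frac{S^-_{i,-(k+l)}}{q_i-q_i^{-1}}(\gamma^{b}P')$ for suitable half-integer exponents $a,b$, and here $\gamma^{a}P',\gamma^{b}P'\in\mathcal N_q^-$, so this term is already in the required form. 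The second term is $x^-_{j,l}$ times an expression which, by the inductive hypothesis, is a sum $K_i\sum_p\frac{S^+_{i,p}Q'_p}{q_i-q_i^{-1}}+K_i^{-1}\sum_r\frac{S^-_{i,r}R'_r}{q_i-q_i^{-1}}$ with $Q'_p,R'_r\in\mathcal N_q^-$.

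The crux is therefore to commute the single leftmost factor $x^-_{j,l}$ to the right past $K_i^{\pm1}$ and past each Schur element $S^\pm_{i,p}$ while keeping exactly one Schur factor. Moving $x^-_{j,l}$ past $K_i^{\pm1}$ only scales it by a power of $q_i$, coming from $K_ix^-_{j,l}K_i^{-1}=q_i^{-(\alpha_i|\alpha_j)}x^-_{j,l}$. Moving it past $S^+_{i,p}$ (resp. $S^-_{i,r}$) is, up to the central scalars already accounted for, the same as moving $x^-_{j,l}$ past a single mode $\psi_{i,m}$ (resp. $\phi_{i,-m}$), and this is governed by the generating-function relation \eqnref{psix} (resp. \eqnref{phix}): conjugation of $x^-_j(v)$ by $\psi_i(u)$ multiplies it only by the scalar series $g_{ji}$, so at the level of modes $x^-_{j,l}\psi_{i,m}$ is a finite $\mathbb C(q^{1/2})$-combination of terms $\psi_{i,m-s}x^-_{j,l+s}$ with $s\ge 0$. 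The sum is finite because $\psi_{i,m-s}=0$ once $m-s<0$ (and symmetrically for $\phi$), and crucially it retains a single $\psi$, hence a single $S^+_{i,m-s}$. Rewriting $\psi_{i,m-s}=K_iS^+_{i,m-s}\gamma^{(m-s)/2}$, pushing the resulting $K_i$ all the way left, and collecting the trailing $x^-_{j,l+s}Q'_p\in\mathcal N_q^-$ exhibits the second term in the desired form as well. This commuting step, together with the bookkeeping that only finitely many modes contribute, is the main obstacle; everything else is formal.

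For uniqueness, suppose $K_i\sum_p\frac{S^+_{i,p}Q_p}{q_i-q_i^{-1}}+K_i^{-1}\sum_r\frac{S^-_{i,r}R_r}{q_i-q_i^{-1}}=0$ with $Q_p,R_r\in\mathcal N_q^-$. Expanding each $Q_p,R_r$ in the PBW basis of \thmref{mainresult} and reordering into the normal form $X^-\,F^{\mathrm{im}}\,(\text{Cartan})\,E^{\mathrm{im}}\,X^+$, the leading term (of highest imaginary degree) of $K_iS^+_{i,p}Q_p$ keeps $S^+_{i,p}$ as an $E^{\mathrm{im}}$ factor and $K_i$ in the Cartan slot, whereas that of $K_i^{-1}S^-_{i,r}R_r$ produces an $F^{\mathrm{im}}$ factor with $K_i^{-1}$; these occupy disjoint PBW slots, so the $K_i$-sum and the $K_i^{-1}$-sum must vanish separately. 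Within the first sum $S^+_{i,p}$ is $D$-homogeneous of degree $p$ with distinct leading monomials in the $E^{(i)}_{m\delta}$ for distinct $p$, so matching $D$-degrees and invoking the linear independence of the PBW monomials of $\mathcal N_q^-$ forces every $Q_p=0$; the symmetric argument (degree $-r$) forces every $R_r=0$. This yields uniqueness and completes the proof.
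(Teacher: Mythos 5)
Your proof is correct, but it organizes the computation differently from the paper, so a comparison is worth recording. The paper does not induct on length: it realizes a monomial $P$ as a coefficient of $\bar P=x^-_{j_1}(v_1)\cdots x^-_{j_k}(v_k)$, applies the Leibniz rule to $[x^+_i(u),\bar P]$ over all $k$ positions at once via \eqnref{xx}, and then pushes the resulting $\psi_i(v_l\gamma^{1/2})$ and $\phi_i(u\gamma^{1/2})$ leftward through the preceding $x^-$-factors using \eqnref{phix} and \eqnref{psix}; the scalar $g_{i,j_m}$-factors that accumulate are precisely what get packaged into the operators $\Omega_{\psi_i}(u)$, $\Omega_{\phi_i}(u)$ of \eqnref{definingomegapsi}--\eqnref{definingomegaphi}, so the existence computation is reused verbatim to set up the Kashiwara algebra. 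You instead peel off the leftmost factor and move a single $x^-_{j,l}$ rightward past $K_i^{\pm1}$ and the Schur elements; the key point you correctly isolate is that the component form of \eqnref{psix} gives $x^-_{j,l}\psi_{i,m}$ as a \emph{finite} combination of $\psi_{i,m-s}x^-_{j,l+s}$, $s\ge 0$, with a single surviving $\psi$-mode (finiteness because $\psi_{i,m'}=0$ for $m'<0$), which is exactly what preserves the shape ``one Schur factor times an element of $\mathcal N_q^-$'' through the induction. Your route is more elementary in that it avoids formal delta functions, at the cost of not producing the $\Omega$-operators as a byproduct; your normalization $\psi_{i,m}=K_iS^+_{i,m}\gamma^{m/2}$ has the opposite sign of $\gamma$-exponent from the paper's $\psi_{i,m}\gamma^{m/2}=K_iS^+_{i,m}$, but since you only claim the identity up to central powers of $\gamma^{\pm 1/2}$, which lie in $\mathcal N_q^-$, this is harmless. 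The uniqueness arguments are essentially identical: both expand in the imaginary PBW basis of \thmref{mainresult} and use that $S^+_{i,p}$ and $S^-_{i,r}$ have distinct leading monomials $(E^{(i)}_{\delta})^{p}$ and $(E^{(i)}_{-\delta})^{r}$ sitting in the $E^{\mathrm{im}}$ and $F^{\mathrm{im}}$ slots respectively.
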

\begin{proof}  For the existence we have the following:
Now any element in $\mathcal N_q^-$ is a sum  of elements of the form
$$
P_{m_1,\dots, m_k}=\gamma^{l/2}x^- _{j_1,m_1} \cdots x^- _{j_k,m_k} , 
$$
where 
$m_i\in\mathbb Z, k\geq 0,$ $ l\in\mathbb Z, \,1\leq j_i\leq N$
and such a product is a summand of
$$
P=P(v_1,\dots,v_k):=\gamma^{l/2}x_{j_1}^-(v_1)\cdots x_{j_k}^-(v_k)
$$
Set $\bar P=x_{j_1}^-(v_1)\cdots x_{j_k}^-(v_k)$ and $\bar P_l=x_{j_1}^-(v_{1})\cdots x_{j_{l-1}}^-(v_{l-1})x_{j_{l+1}}^-(v_{l+1})\cdots x_{j_k}^-(v_k)$.

Then we have by \eqnref{phix} and \eqnref{psix},
\begin{align*}
x_{j_1}^-(v_1)\cdots x_{j_{l-1}}^-(v_{l-1})\psi_i(v_l\gamma^{1/2})
	&=\prod_{m=1}^{l-1}g_{i,j_m}(v_{j_m}v_l^{-1} )^{-1}
	\psi_i(v_l\gamma^{1/2}) x_{j_1}^-(v_1)\cdots x_{j_{l-1}}^-(v_{l-1})\\
x_{j_1}^-(v_1)\cdots x_{j_{l-1}}^-(v_{l-1})\phi_i(u\gamma^{1/2})
	&=\prod_{m=1}^{l-1}
	g_{i,j_m}(u\gamma v_{j_m}^{-1})\phi_i(u\gamma^{1/2})x_{j_1}^-(v_1)\cdots x_{j_{l-1}}^-(v_{l-1}), \\
\end{align*}
so that by \eqnref{xx}
\begin{align*}
[x_i^+(u),&x_{j_1}^-(v_1)\cdots x_{j_k}^-(v_k)]=\sum_{l=1}^kx_{j_1}^-(v_1)\cdots [x_i^+(u),x_{j_l}^-(v_l)]\cdots x_{j_k}^-(v_k)  \\
&=\sum_{l=1}^k \delta_{i,j_l}x_{j_1}^-(v_1)\cdots\left (\frac{\delta(u/v_l\gamma)\psi_i(v_l\gamma^{1/2})-\delta(u\gamma/v_l)\phi_i(u\gamma^{1/2})}{q_i-q_i^{-1}}\right)\cdots x_{j_k}^-(v_k)  \\
&=\sum_{l=1}^k \delta_{i,j_l}x_{j_1}^-(v_1)\cdots x_{j_{l-1}}^-(v_{l-1})\psi_i(v_l\gamma^{1/2})x_{j_{l+1}}^-(v_{l+1})\cdots x_{j_k}^-(v_k) \frac{\delta(u/v_l\gamma)}{q_i-q_i^{-1}} \\
&\quad-\sum_{l=1}^k \delta_{i,j_l}x_{j_1}^-(v_1)\cdots x_{j_{l-1}}^-(v_{l-1})\phi_i(u\gamma^{1/2})x_{j_{l+1}}^-(v_{l+1})\cdots x_{j_k}^-(v_k) \frac{\delta(u\gamma/v_l)}{q_i-q_i^{-1}}\\  \\
&=\sum_{l=1}^k\delta_{i,j_l}\prod_{m=1}^{l-1}g_{i,j_m}(v_{j_m}v_l^{-1} )^{-1} \frac{\psi_i(v_l\gamma^{1/2})\delta(u/v_l\gamma)}{q-q^{-1}}  \bar P_l
\\
&\quad-\sum_{l=1}^k\delta_{i,j_l}\prod_{m=1}^{l-1} 
g_{i,j_m}(u\gamma v_{j_m}^{-1})\frac{\phi_i(u\gamma^{1/2})\delta(u\gamma/v_l)}{q_i-q_i^{-1}}  \bar P_l \\  \\
&=\frac{\psi_i(u\gamma^{-1/2})}{q_i-q_i^{-1}} \sum_{l=1}^k\delta_{i,j_l}\prod_{j=1}^{l-1}g_{i,j_m,q^{-1}}(v_{j_m}/v_l)
  \bar P_l \delta(u/v_l\gamma)    \\
&\quad-\frac{\phi_i(u\gamma^{1/2})}{q_i-q_i^{-1}}
\sum_{l=1}^k\delta_{i,j_l}\prod_{j=1}^{l-1}
g_{i,j_m}(v_l/v_{j_m})\bar P_l\delta(u\gamma/v_l) .
\end{align*}

Note that $\psi_{i,k}(u\gamma^{-k/2})$ and $\phi_{i,k}(u\gamma^{k/2})$ do not depend on $P$.
By \eqnref{phidef} we can rewrite 
\begin{align*}
\psi_i(u\gamma^{-1/2})=\sum_{k=0}^{\infty}\psi_{ik}\gamma^{k/2}u^{-k} &= K_i \exp\left(
(q_i-q^{-1}_i)\sum_{l>0}  h_{il}\gamma^{l/2}u^{-l}\right)  \\
&=\quad K_i\left(\sum_{k=0}^\infty S_{i,k}^+u^{-k}\right),
\end{align*}
so that  $\psi_{i,k}\gamma^{k/2}=K_iS_{i,k}^+$ and similarly $\phi_{i,k}\gamma^{-k/2}=K_i^{-1}S_{i,k}^-$. Thus  
\begin{align*}
 [x^+_{im},x^-_{j_1,n_1}\cdots x^-_{j_k,n_k}]&=K_i\sum \frac{ S_{i,p}^+Q(i,k,p)}{q_i-q^{-1}_i}+K_i^{-1}\sum \frac{ S_{i,r}^-R(i,k,r)}{q_i-q_i^{-1}},
\end{align*}
where $Q(i,k,p),R(i,k,r)\in \mathcal N_q^-$.   This proves existence.

Uniqueness is proven as follows: 
 The components of $\bar P_l(u)$ have the form 
$$
x^-_{j_1,n_1}\cdots x^-_{j_{l-1},n_{l-1}}x^-_{j_{l+1},n_{l+1}}\cdots x^-_{j_k,n_k}
$$
and after using the Levendorski and So{\u\i}belman's convexity formula \eqnref{convexity} (possibly after applying a Lusztig automorphism $T_{w}$) we can rewrite this component as a linear combination of elements of the form $X^-(\mathbf m)$.  Let $\mathcal F^-$ be the span of the $X^-(\mathbf m)$ over $\mathbb Q(q^{1/2})$.    Then
\begin{align*}
 [x^+_{im},x^-_{j_1,n_1}\cdots x^-_{j_k,n_k}]&=K_i\sum \frac{ S_{i,p}^+\tilde Q(i,k,p)}{q_i-q^{-1}_i}+K_i^{-1}\sum \frac{ S_{i,r}^-\tilde R(i,k,r)}{q_i-q_i^{-1}}.
\end{align*}
where $\tilde Q(i,k,p),\tilde R(i,k,r)\in \mathcal F^-$. 

If also
\begin{align*}
 [x^+_{im},x^-_{j_1,n_1}\cdots x^-_{j_k,n_k}]&=K_i\sum \frac{ S_{i,p}^+ Q^\sharp(i,k,p)}{q_i-q^{-1}_i}+K_i^{-1}\sum \frac{ S_{i,r}^- R^\sharp(i,k,r)}{q_i-q_i^{-1}}.
\end{align*}
for some $ Q^\sharp(i,k,p), R^\sharp(i,k,r)\in \mathcal N_q^-$, then from \thmref{pbwthm} we must have 
$$
 Q^\sharp(i,k,p)=\tilde Q(i,k,p)= Q(i,k,p),\quad  R^\sharp(i,k,r)=\tilde R(i,k,r)= R(i,k,r)
$$
as the $S_{i,p}^+,S_{i,r}^-$ have leading terms $(E_{\pm\delta}^{(i)})^k$ that are distinct PBW basis elements and $\tilde Q(i,k,p)$ and the $\tilde R(i,k,r)$ are sums of PBW basis elements.
\end{proof}

\lemref{Plemma} motivates the definition of a family of operators as follows.
Set
\begin{align*}
G_{il}&=G_{il}^{1/q}=G_{il}^{1/q}(v_{j_1},\dots,v_{j_l},v_l):=\delta_{i,j_l}\prod_{j=1}^{l-1}g_{i,j_m,q^{-1}}(v_{j_m}/v_l), \\
 G_{il}^{q}&=G_{il} (v_{j_1},\dots,v_{j_l},v_l):=\delta_{i,j_l}\prod_{j=1}^{l-1}g_{i,j_m}(v_l /v_{j_m})
\end{align*}
where $G_{i1}:=\delta_{i,j_1}$.
Now define a collection of operators $\Omega_{\psi_i}(k),\Omega_{\phi_i}(k):\mathcal N_q^-\to \mathcal N_q^-$, $k\in\mathbb Z$, in terms of the generating functions
$$
\Omega_{\psi_i}(u)=\sum_{l\in\mathbb Z}\Omega_{\psi_i}(l)u^{-l},\quad \Omega_{\phi_i}(u)=\sum_{l\in\mathbb Z}\Omega_{\phi_i}(l)u^{-l}
$$
by
\begin{align}\label{definingomegapsi}
\Omega_{\psi_i}(u)(\bar P):&= \sum_{l=1}^kG_{il}
  \bar P_l \delta(u/v_l\gamma) \\
   \Omega_{\phi_i}(u)(\bar P):&=\sum_{l=1}^kG_{il}^q  \bar P_l\delta(u\gamma/v_l).\label{definingomegaphi}
\end{align}
Then we can write the above computation in the proof of Lemma 4.0.2 as
\begin{equation}\label{xplusP}
[x^+_i(u),\bar P]=(q_i-q_i^{-1})^{-1}\left({\psi_i}(u\gamma^{-1/2})\Omega_{\psi_i}(u)(\bar P)-{\phi_i}(u\gamma^{1/2})\Omega_{\phi_i}(u)(\bar P)\right).
\end{equation}

Note that $\Omega_{{\psi_i}}(u)(1)=\Omega_{{\phi_i}}(u)(1)=0$.  More explicitly let us write
$$
\bar P=x^-_{j_1}(v_1)\cdots x_{j_k}^-(v_k)=\sum_{n\in\mathbb Z}\sum_{\genfrac{}{}{0pt}{}{n_1,n_2,\dots,n_k\in\mathbb Z}{n_1+\cdots +n_k=n}}x^-_{j_1,n_1}\cdots x^-_{j_k,n_k}v_1^{-n_1}\cdots v_k^{-n_k}
$$
Then
\begin{align*}
\psi_i&(u\gamma^{-1/2})\Omega_{\psi_i}(u)(\bar P)\\
&=\sum_{l\geq 0}\sum_{p\in\mathbb Z}\sum_{n_i\in\mathbb Z } \gamma^{l/2}\psi_{il}\Omega_{\psi_i}(p)(x^-_{j_1,n_1}\cdots x^-_{j_k,n_k})v_1^{-n_1}\cdots v_k^{-n_k}u^{-l-p}  \\
&=\sum_{n_i\in\mathbb Z } \sum_{m\in\mathbb Z}\sum_{l\geq 0}\gamma^{l/2}\psi_{il}\Omega_{\psi_i}(m-l)(x^-_{j_1,n_1}\cdots x^-_{j_k,n_k})v_1^{-n_1}\cdots v_k^{-n_k}u^{-m}
\end{align*}
while
\begin{equation*}
[x^+_i(u),\bar P]=\sum_{m\in\mathbb Z}\sum_{n_1,n_2,\dots,n_k\in\mathbb Z } [x^+_{im},x^-_{j_1,n_1}\cdots x^-_{j_k,n_k}]v_1^{-n_1}\cdots v_k^{-n_k}u^{-m}.
\end{equation*}
Thus for a fixed $m$ and $k$-tuple $((j_1,n_1),\dots,(j_k,n_k))$ the sum
$$
\sum_{l\geq 0}\gamma^{l/2}\psi_{il}\Omega_{\psi_i}(m-l)(x^-_{j_1,n_1}\cdots x^-_{j_k,n_k})
$$
must be finite.   Hence
\begin{equation}\label{omegalocalfin}
\Omega_{\psi_i}(m-l)(x^-_{j_1,n_1}\cdots x^-_{j_k,n_k})=0,
\end{equation}
 for $l$ sufficiently large.

\begin{prop} \label{commutatorprop} Consider $x^-_i(v)=\sum_mx^-_{im}v^{-m}$ as a formal power series of left multiplication operators $x^-_{im}:\mathcal N_q^-\to \mathcal N_q^-$.  Then
\begin{align}
\Omega_{\psi_m}(u)x^-_i(v)&=\delta_{i,m}\delta(v\gamma/u)+g_{i,m,q^{-1}}(v\gamma/u)x^-_i(v)\Omega_{\psi_m}(u),
\label{omegapsi}\\
  \Omega_{\phi_m}(u)x^-_i(v)&=\delta_{i,m}\delta(u\gamma/v)+g_{i,m}(u\gamma/v)x^-_i(v)\Omega_{\phi_m}(u)\label{omegaphi}  \\
(q^{(\alpha_j|\alpha_k)}u_1-u_2)\Omega_{\psi_j}(u_1)\Omega_{\psi_k}(u_2)&=(u_1-q^{(\alpha_j|\alpha_k)}u_2)\Omega_{\psi_k}(u_2)\Omega_{\psi_j}(u_1) \label{psipsi} \\
(q^{(\alpha_j|\alpha_k)}u_1-u_2)\Omega_{\phi_j}(u_1)\Omega_{\phi_k}(u_2)&=(u_1-q^{(\alpha_j|\alpha_k)}u_2)\Omega_{\phi_k}(u_2)\Omega_{\phi_j}(u_1)   \label{phiphi} \\
(q^{(\alpha_j|\alpha_k)}\gamma^2u_1-u_2)\Omega_{\phi_j}(u_1)\Omega_{\psi_k}(u_2)&=(\gamma^2u_1-q^{(\alpha_j|\alpha_k)}u_2)\Omega_{\psi_k}(u_2)\Omega_{\phi_j}(u_1)\label{omegaphipsi}
\end{align}
\end{prop}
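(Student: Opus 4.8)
The plan is to deduce all five relations from the master formula \eqnref{xplusP}, reading the operators $\Omega_{\psi_i}(u)$, $\Omega_{\phi_i}(u)$ as the ``contraction'' data recorded when an $x^+$ is pushed past an element of $\mathcal N_q^-$. Relations \eqnref{omegapsi} and \eqnref{omegaphi} I would obtain directly from the definitions \eqnref{definingomegapsi} and \eqnref{definingomegaphi}. Writing $\bar P=x^-_{j_1}(v_1)\cdots x^-_{j_k}(v_k)$ and prepending $x^-_i(v)$, the sum defining $\Omega_{\psi_m}(u)(x^-_i(v)\bar P)$ gains one new term from the prepended factor, whose $G$-coefficient is the empty product $\delta_{i,m}$ and whose delta factor is $\delta(u/v\gamma)=\delta(v\gamma/u)$: this is the inhomogeneous term. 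Every other term keeps $x^-_i(v)$ in front and, since that factor now precedes the contracted site $v_l$, picks up exactly one extra factor $g_{m,i,q^{-1}}(v/v_l)$. Applying the substitution rule $h(v_l)\delta(u/v_l\gamma)=h(u/\gamma)\delta(u/v_l\gamma)$ to replace $v/v_l$ by $v\gamma/u$, together with the symmetry $g_{m,i}=g_{i,m}$, collapses these terms into $g_{i,m,q^{-1}}(v\gamma/u)\,x^-_i(v)\,\Omega_{\psi_m}(u)(\bar P)$, which is \eqnref{omegapsi}; relation \eqnref{omegaphi} is identical with $g_{i,m}$, $\delta(u\gamma/v)$ and the substitution $v_l\mapsto u\gamma$.

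For the quadratic relations I would work inside $U_q$ (or a suitable completion) and expand both $x^+_j(u_1)x^+_k(u_2)P$ and $x^+_k(u_2)x^+_j(u_1)P$, for $P\in\mathcal N_q^-$, by applying \eqnref{xplusP} twice, using \eqnref{phix} and \eqnref{psix} to move the outer $x^+$ past the $\psi$'s and $\phi$'s produced by the inner contraction. Each expansion is a sum of PBW-separated pieces labelled by the pair of Cartan-type factors appearing: $\psi_j\psi_k$, $\phi_j\phi_k$, the two mixed types, pieces with a single $\psi$ or $\phi$ and one surviving $x^+$, and the term $Px^+_j(u_1)x^+_k(u_2)$. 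I would then impose the Drinfeld quadratic relation $(u_1-q^{a}u_2)x^+_j(u_1)x^+_k(u_2)=(q^{a}u_1-u_2)x^+_k(u_2)x^+_j(u_1)$ with $a=(\alpha_j|\alpha_k)$ and compare the two sides piece by piece. The $\psi_j\psi_k$-piece yields \eqnref{psipsi}: pushing $x^+_j(u_1)$ past $\psi_k(u_2\gamma^{-1/2})$ contributes $g_{jk}(u_1/u_2)$, the factors $\psi_j,\psi_k$ commute, and the scalars collapse through the functional identities $(u_1-q^{a}u_2)g_{jk}(u_1/u_2)=q^{a}u_1-u_2$ and $(q^{a}u_1-u_2)g_{jk}(u_2/u_1)=u_1-q^{a}u_2$, valid because $g_{jk}(t)$ expands $(q^{a}t-1)/(t-q^{a})$. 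The $\phi_j\phi_k$-piece gives \eqnref{phiphi} in the same way, now using $[\phi_j,\phi_k]=0$ and \eqnref{phix}.

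The main obstacle is the mixed relation \eqnref{omegaphipsi}. To isolate it I would extract the $\phi_j(u_1\gamma^{1/2})\psi_k(u_2\gamma^{-1/2})$-piece from both expansions; unlike the parallel cases the surviving Cartan factors do not commute, so after contracting one must reorder $\psi_k(u_2\gamma^{-1/2})$ and $\phi_j(u_1\gamma^{1/2})$ using $\phi_j(u)\psi_k(v)\phi_j(u)^{-1}\psi_k(v)^{-1}=g_{jk}(uv^{-1}\gamma^{-1})/g_{jk}(uv^{-1}\gamma)$. It is exactly this reordering, whose two $g$-arguments are shifted by $\gamma^{\mp1}$, that produces the $\gamma^{2}$ in the coefficients $(q^{a}\gamma^{2}u_1-u_2)$ and $(\gamma^{2}u_1-q^{a}u_2)$; the rest is a careful but routine tracking of the $\gamma^{\pm1/2}$-shifts carried by the arguments of $\psi_k$ and $\phi_j$, combined with the same functional identities for $g$. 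Throughout, the legitimacy of matching ``pieces'' rests on the PBW basis of Theorem~\ref{mainresult}, which guarantees that the $\psi\psi$, $\phi\phi$, $\psi\phi$ and $\phi\psi$ contributions, each multiplied by an element of $\mathcal N_q^-$, are linearly independent, so that the termwise comparison is valid.
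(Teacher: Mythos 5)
Your treatment of \eqnref{omegapsi} and \eqnref{omegaphi} coincides with the paper's: both read the inhomogeneous term off the contraction at the prepended site (where the $G$-coefficient is the empty product $\delta_{i,m}$) and absorb the extra factor $g_{i,m,q^{-1}}(v/v_l)$ into $g_{i,m,q^{-1}}(v\gamma/u)$ through the delta function. For the three quadratic relations, however, the paper does something quite different and considerably cheaper. It never goes back up into $U_q$: writing $S$ for the difference of the two sides of, say, \eqnref{psipsi}, it uses the already-established relation \eqnref{omegapsi} twice to show that $Sx^-_i(v)=g_{k,i,q^{-1}}(v\gamma/u_2)\,g_{j,i,q^{-1}}(v\gamma/u_1)\,x^-_i(v)\,S$ as operators on $\mathcal N_q^-$ (the inhomogeneous delta terms cancel by the $g_{q^{-1}}$-analogues of the functional identities you quote), iterates this across a monomial $x^-_{j_1}(v_1)\cdots x^-_{j_n}(v_n)$, and applies the result to $1$, where $S$ vanishes because $\Omega_{\psi_i}(u)(1)=\Omega_{\phi_i}(u)(1)=0$; hence $S=0$. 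The same scheme yields \eqnref{phiphi} and \eqnref{omegaphipsi}, the $\gamma^2$ in the mixed case being forced by the mismatch between the arguments $v\gamma/u$ and $u\gamma/v$ in \eqnref{omegapsi} and \eqnref{omegaphi}.

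Your alternative route for \eqnref{psipsi}--\eqnref{omegaphipsi} --- applying \eqnref{xplusP} twice to $x^+_j(u_1)x^+_k(u_2)\bar P$ and comparing with the Drinfeld quadratic relation --- would produce the correct coefficients (your functional identities for $g$ and your account of where the $\gamma^2$ comes from check out), but it has a genuine gap at the step you defer to \thmref{mainresult}. The pieces you propose to match, e.g.\ $\psi_j(u_1\gamma^{-1/2})\psi_k(u_2\gamma^{-1/2})A$ with $A$ valued in $\mathcal N_q^-$, are not PBW-ordered monomials: the $\psi$'s and $\phi$'s carry imaginary root vectors $h_{i,\pm l}$ which must first be commuted past the $x^-$-factors of $A$ via \eqnref{axcommutator}, generating cross terms, before \thmref{mainresult} can be invoked. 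Worse, when $j=k$ the two mixed pieces of type $\psi_j\phi_j(\cdots)$ and $\phi_j\psi_j(\cdots)$ carry the same $K$-weight and the same total imaginary degree, and $\psi_j$ and $\phi_j$ do not commute for $\gamma\neq 1$; so the asserted linear independence of the four Cartan types does not follow from weight considerations or from the PBW basis as stated, and isolating the $\phi_j\psi_j$ contribution --- exactly the case needed for \eqnref{omegaphipsi} --- would require a separate leading-term analysis that you have not supplied. The paper's argument sidesteps all of this by working entirely with operators on $\mathcal N_q^-$ and using only that the $\Omega$'s annihilate $1$.
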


\begin{proof} Setting $\bar P=x^-_{j_1}(v_1)\cdots x^-_{j_k}(v_k)$ we get
\begin{align*}
\Omega_{\psi_m}(u)x^-_i(v)&(\bar P)=\delta_{i,m}
  x^-_{j_1}(v_1)\cdots x^-_{j_k}(v_k)\delta(u/v\gamma) \\
  &\quad +x^-_i(v)\sum_{l=1}^k g_{i,m,q^{-1}}(v/v_l)G_{ml}
   \bar P_l \delta(u/v_l\gamma)  \\
  &=
 \delta_{i,m}\bar P \delta(u/v\gamma) +x_i^-(v)g_{i,m,q^{-1}}(v\gamma /u)\Omega_{\psi_m}(u)\bar P.
\end{align*}
Similarly
\begin{align*}
\Omega_{\phi_m}(u)x^-_i(v)(\bar P)&
    =\delta_{i,m}
 x^-_{j_1}(v_1)\cdots x^-_{j_k}(v_k) \delta(u\gamma/v ) \\
      &\quad +x^-_i(v)\sum_{l=1}^k g_{i,m} (v_l/v)G_{ml}^{q}
     \bar P_l \delta(u\gamma/v_l )  \\
  &=\delta_{i,m}\bar P \delta(v/u\gamma) +x^-_i(v)g_{i,m} (u\gamma/v)\Omega_{\phi_m}(u)\bar P.
\end{align*}
One can prove \eqnref{psipsi} and \eqnref{phiphi} directly from their definitions, \eqnref{definingomegapsi} and \eqnref{definingomegaphi}, but there is another way (due to Kashiwara) to prove this identity and it goes as follows:
\begin{align*}
\Omega_{\psi_j}(u_1)\Omega_{\psi_k}(u_2)&x_i^-(v) =\delta_{k,i}
  \Omega_{\psi_j}(u_1) \delta(v\gamma/u_2) +\Omega_{\psi_j}(u_1) x_i^-(v)g_{i,k,q^{-1}}(v\gamma /u_2)\Omega_{\psi_k}(u_2)
  \\
  &=\delta_{k,i} \Omega_{\psi_j}(u_1) \delta(v\gamma/u_2) +\delta_{j,i}g_{i,k,q^{-1}}(v\gamma /u_2)\Omega_{\psi_k} (u_2)\delta(v\gamma/u_1)
 \\
  &\quad +g_{k,i,q^{-1}}(v\gamma /u_2)g_{j,i,q^{-1}}(v \gamma/u_1)x^-_i(v)\Omega_{\psi_j}(u_1)\Omega_{\psi_k}(u_2)
\end{align*}
and on the other hand
\begin{align*}
\Omega_{\psi_k}(u_2)\Omega_{\psi_j}(u_1)x_i^-(v) 
  &= \delta_{j,i}\Omega_{\psi_k}(u_2) \delta(v\gamma/u_1) +\delta_{k,i}g_{i,j, q^{-1}}(v\gamma /u_1)\Omega_{\psi_k} (u_1)\delta(v \gamma/u_2)
 \\
  &\quad +g_{j,i,q^{-1}}(v\gamma /u_1)g_{k,i,q^{-1}}(v \gamma/u_2)x_i^-(v)\Omega_{\psi_k}(u_2)\Omega_{\psi_j} (u_1)
\end{align*}

Thus setting $S=(u_1-q^{-(\alpha_j|\alpha_k)}u_2)\Omega_{\psi_j}(u_1)\Omega_{\psi_k}(u_2)-(q^{-(\alpha_j|\alpha_k)}u_1-u_2)\Omega_{\psi_k}(u_2)\Omega_{\psi_j} (u_1)$ we get
\begin{align*}
Sx_i^-(v)
&=(u_1-q^{-(\alpha_j|\alpha_k)}u_2)\delta_{i,k} \Omega_{\psi_j}(u_1) \delta(v\gamma /u_2)\\
&\quad +(u_1-q^{-(\alpha_j|\alpha_k)}u_2)\delta_{j,i}g_{k,i,q^{-1}}(v\gamma /u_2)\Omega_{\psi_k} (u_2)\delta(v \gamma/u_1) \\
&\quad +(u_1-q^{-(\alpha_j|\alpha_k)}u_2)g_{k,i,q^{-1}}(v\gamma /u_2)g_{j,i,q^{-1}}(v \gamma/u_1)x^-_i(v)\Omega_{\psi_j}(u_1)\Omega_{\psi_k} (u_2)  \\
&\quad-(q^{-(\alpha_j|\alpha_k)}u_1-u_2)\delta_{j,i}\Omega_{\psi_k}(u_2) \delta(v\gamma/u_1) \\
&\quad-(q^{-(\alpha_j|\alpha_k)}u_1-u_2)
		\delta_{k,i}g_{j,i,q^{-1}}(v\gamma /u_1)\Omega_{\psi_j}(u_1)\delta(v \gamma/u_2) \\
&\quad -(q^{-(\alpha_j|\alpha_k)}u_1-u_2)g_{j,i,q^{-1}}(v\gamma /u_1)g_{q^{-1}}(v \gamma/u_2)x^-_i(v)\Omega_{\psi_k}(u_2)\Omega_{\psi_j} (u_1) \\ \\
&=\left((u_1-q^{-(\alpha_j|\alpha_k)}u_2)\delta_{k,i}-(q^{-(\alpha_j|\alpha_k)}u_1-u_2)\delta_{k,i}g_{j,i,q^{-1}}(v\gamma /u_1)\right)\Omega_{\psi_j} (u_1)\delta(v \gamma/u_2)  \\
&\quad+\left((u_1-q^{-(\alpha_j|\alpha_k)}u_2)\delta_{j,i}g_{k,i,q^{-1}}(v\gamma /u_2) -(q^{-(\alpha_j|\alpha_k)}u_1-u_2)\delta_{j,i}\right)\Omega_{\psi_k}(u_2) \delta(v\gamma/u_1) \\
&\quad+g_{k,i,q^{-1}}(v\gamma /u_2)g_{j,i,q^{-1}}(v\gamma /u_1)x^-_i(v) \\
&\hskip 20pt \times \left((u_1-q^{-(\alpha_j|\alpha_k)}u_2)\Omega_{\psi_j}(u_1)\Omega_{\psi_k} (u_2) -(q^{-(\alpha_j|\alpha_k)}u_1-u_2) ) 
	\Omega_		{\psi_k}(u_2)\Omega_{\psi_j} (u_1)\right) \\ \\
&=g_{k,i,q^{-1}}(v\gamma /u_2)g_{j,i,q^{-1}}(v\gamma /u_1)x^-_i(v)S
\end{align*}
Hence
$$
Sx^-_{j_1}(v_1)\cdots x^-_{j_n}(v_n)=  \prod_{i=1}^ng_{k,j_i,q^{-1}}(v_{j_i} \gamma/u_1)g_{j,j_i,q^{-1}}(v_{j_i}\gamma /u_2)x^-_{j_1}(v_1)\cdots x^-_{j_n}(v_n) S,
$$
which implies, after applying this to $1$,  that  $S=0$.

Next we have
\begin{align*}
\Omega_{\phi_j}(u_1)\Omega_{\phi_k}(u_2)x^-_i(v)    
	&= \delta_{i,k}\Omega_{\phi_j}(u_1) \delta(v/u_2 \gamma) +\delta_{j,i}g_{k,i}(u_2 \gamma/v)\Omega_{\phi_k} (u_2)\delta(v/u_1 \gamma) \\
  	&\quad +g_{k,i}(u_2 \gamma/v)g_{j,i}(u_1 \gamma/v)x^-_i(v)\Omega_{\phi_j}(u_1)\Omega_{\phi_k} (u_2)
\end{align*}
and on the other hand
\begin{align*}
\Omega_{\phi_k}(u_2)\Omega_{\phi_j}(u_1)x^-_i(v)  &= \delta_{j,i}\Omega_{\phi_k}(u_2) \delta(v/u_1 \gamma) +\delta_{k,i}g_{j,i}(u_1 \gamma/v)\Omega_{\phi_j} (u_1)\delta(v/u_2 \gamma)\\
&\quad +g_{j,i}(u_1 \gamma/v)g_{k,i}(u_2 \gamma/v)x^-_i(v)\Omega_{\phi_k}(u_2)\Omega_{\phi_j} (u_1)
\end{align*}

So if we set   $S=(u_1-q^{-(\alpha_j|\alpha_k)}u_2)\Omega_{\phi_j}(u_1)\Omega_{\phi_k}(u_2)-(q^{-(\alpha_j|\alpha_k)}u_1-u_2)\Omega_{\phi_k}(u_2)\Omega_{\phi_j} (u_1)$ we get
\begin{align*}
Sx^-_i(v)  &= (u_1-q^{-(\alpha_j|\alpha_k)}u_2)\delta_{k,i}\Omega_{\phi_j}(u_1) \delta(v /u_2 \gamma)\\
&\quad +(u_1-q^{-(\alpha_j|\alpha_k)}u_2)\delta_{j,i}g_{k,i}(u_2 \gamma/v)\Omega_{\phi_k} (u_2)\delta(v/u_1 \gamma)  \\
&\quad +(u_1-q^{-(\alpha_j|\alpha_k)}u_2)g_{k,i}(u_2 \gamma/v)g_{j,i}(u_1 \gamma/v)x^-_i(v)\Omega_{\phi_j}(u_1)\Omega_{\phi_k} (u_2)  \\
&\quad -(q^{-(\alpha_j|\alpha_k)}u_1-u_2)\delta_{i,j}\Omega_{\phi_k}(u_2) \delta(v/u_1 \gamma)\\ 
&\quad -(q^{-(\alpha_j|\alpha_k)}u_1-u_2)\delta_{i,k}g_{j,i}(u_1 \gamma/v)\Omega_{\phi_j} (u_1)\delta(v/u_2 \gamma)\\
&\quad -(q^{-(\alpha_j|\alpha_k)}u_1-u_2)g_{j,i}(u_1 \gamma/v)g_{k,i}(u_2 \gamma/v)x^-_i(v)\Omega_{\phi_k}(u_2)\Omega_{\phi_j} (u_1)
 \\ \\
&= \left((u_1-q^{-(\alpha_j|\alpha_k)}u_2) -(q^{-(\alpha_j|\alpha_k)}u_1-u_2)g_{j,i}(u_1 \gamma/v)\right)
\delta_{k,i}\Omega_{\phi_j} (u_1)\delta(v/u_2 \gamma)      \\
&\quad+\left((u_1-q^{-(\alpha_j|\alpha_k)}u_2)g_{k,i}(u_2 \gamma/v) -(q^{-(\alpha_j|\alpha_k)}u_1-u_2)\right)
\delta_{j,i} \Omega_{\phi_k} (u_2)\delta(v/u_1 \gamma)  \\
&\quad +g_{k,i}(u_2 \gamma/v)g_{j,i}(u_1 \gamma/v)x^-_i(v) \\
&\hskip 20pt \times\left((u_1-q^{-(\alpha_j|\alpha_k)}u_2)\Omega_{\phi_j}(u_1)\Omega_{\phi_k} (u_2) -(q^{-2}u_1-u_2) \Omega_{\phi_k}(u_2)\Omega_{\phi_j}(u_1)\right)
 \\ \\
&=   g_{k,i}(u_2 \gamma/v)g_{j,i}(u_1 \gamma/v)x^-_i(v)S.
\end{align*}
As in the calculation for \eqnref{psipsi} we get $S=0$.

Moreover
\begin{align*}
\Omega_{\phi_j}(u_1)\Omega_{\psi_k}(u_2)&x^-_i(v) =\delta_{k,i}
  \Omega_{\phi_j}(u_1) \delta(v\gamma/u_2) +\Omega_{\phi_j}(u_1) x^-_i(v)g_{k,i,q^{-1}}(v\gamma /u_2)\Omega_{\psi_k}(u_2)
  \\
  &=\delta_{k,i} \Omega_{\phi_j}(u_1) \delta(v\gamma/u_2) +\delta_{j,i}g_{k,i,q^{-1}}(v\gamma /u_2)\Omega_{\psi_k} (u_2)\delta(u_1\gamma/v)
 \\
  &\quad +g_{k,i,q^{-1}}(v\gamma /u_2)g_{j,i}(u_1\gamma/v)x^-_i(v)\Omega_{\phi_j}(u_1)\Omega_{\psi_k} (u_2)
\end{align*}

and
\begin{align*}
\Omega_{\psi_k}(u_2)\Omega_{\phi_j}(u_1)&x^-_i(v) =\delta_{j,i}
  \Omega_{\psi_k}(u_2) \delta(u_1\gamma/v) +\Omega_{\psi_k}(u_2) x^-_i(v)g_{j,i}(u_1\gamma/v)\Omega_{\phi_j}(u_1)
  \\
  &= \delta_{j,i}\Omega_{\psi_k}(u_2) \delta(u_1\gamma/v)  +\delta_{k,i}g_{j,i}(u_1\gamma/v)\Omega_{\phi_j}(u_1)\delta(v\gamma/u_2)
\\
  &\quad+g_{k,i,q^{-1}}(v\gamma /u_2)g_{j,i}(u_1\gamma/v)x^-_i(v)\Omega_{\psi_k}(u_2)\Omega_\phi (u_1)
\end{align*}

Set $S=(q^{(\alpha_j|\alpha_k)}\gamma^2u_1-u_2)\Omega_{\phi_j}(u_1)\Omega_{\psi_k}(u_2)-(\gamma^2u_1-q^{(\alpha_j|\alpha_k)}u_2)\Omega_{\psi_k}(u_2)\Omega_{\phi_j}(u_1) $.
Then
\begin{align*}
Sx^-_i(v)
 &= (q^{(\alpha_j|\alpha_k)}\gamma^2u_1-u_2)\delta_{k,i}\Omega_{\phi_j}(u_1) \delta(v\gamma/u_2)\\
 &\quad   +(q^{(\alpha_j|\alpha_k)}\gamma^2u_1-u_2)\delta_{j,i}g_{k,i,q^{-1}}(v\gamma /u_2)\Omega_{\psi_k} (u_2)\delta(u_1\gamma/v)
 \\
  &\quad +(q^{(\alpha_j|\alpha_k)}\gamma^2u_1- u_2)g_{k,i,q^{-1}}(v\gamma /u_2)g_{j,i}(u_1\gamma/v)x^-_i(v)\Omega_{\phi_j}(u_1)\Omega_{\psi_k} (u_2)  \\
  &\quad-( \gamma^2u_1-q^{(\alpha_j|\alpha_k)}u_2) \delta_{j,i}\Omega_{\psi_k}(u_2) \delta(u_1\gamma/v)\\
 &\quad   -( \gamma^2u_1-q^{(\alpha_j|\alpha_k)}u_2)\delta_{k,i}g_{j,i}(u_1\gamma/v)\Omega_{\phi_j}(u_1)\delta(v\gamma/u_2)
\\
  &\quad-( \gamma^2u_1-q^{(\alpha_j|\alpha_k)}u_2)g_{k,i,q^{-1}}(v\gamma /u_2)g_{j,i}(u_1\gamma/v)x^-_i(v)\Omega_{\psi_k}(u_2)\Omega_\phi (u_1)\\  \\
&=\left((q^{(\alpha_j|\alpha_k)}\gamma^2u_1-u_2) -(\gamma^2u_1-q^{(\alpha_j|\alpha_k)}u_2)g_{j,i}(u_1\gamma/v)\right)\delta_{k,i}\Omega_{\phi_j}(u_1)\delta(v\gamma/u_2)
 \\
  &\quad +\left((q^{(\alpha_j|\alpha_k)}\gamma^2u_1-u_2)g_{k,i,q^{-1}}(v\gamma /u_2)
    -(\gamma^2u_1-q^{(\alpha_j|\alpha_k)}u_2)\right)\delta_{j,i} \Omega_{\psi_k}(u_2) \delta(u_1\gamma/v)
\\
  &\quad+g_{k,i,q^{-1}}(v\gamma /u_2)g_{j,i}(u_1\gamma/v)x^-_i(v)\\
  &\hskip 20pt \times\left((q^{(\alpha_j|\alpha_k)}\gamma^2u_1-u_2)\Omega_{\phi_j}(u_1)\Omega_{\psi_k} (u_2) -(\gamma^2u_1-q^{(\alpha_j|\alpha_k)}u_2)\Omega_{\psi_k}(u_2)\Omega_\phi (u_1)\right)\\  \\
  &=g_{k,i,q^{-1}}(v\gamma /u_2)g_{j,i}(u_1\gamma/v)x^-_i(v)S.
\end{align*}
As in the previous calculations we get that $S=0$ and thus the last statement of the proposition hold.

\end{proof}

The identities (\ref{omegapsi}) , (\ref{omegaphi}) in \propref{commutatorprop} can be rewritten as
\begin{align}
(q^{(\alpha_i|\alpha_j)}v\gamma- u)\Omega_{\psi_j}(u)x^-_i(v)&=(q^{(\alpha_i|\alpha_j)}v\gamma- u)\delta_{i,j}\delta(v\gamma/u)+(q^{(\alpha_i|\alpha_j)}v\gamma -u)x^-_i(v)\Omega_{\psi_i}(u),
\label{omegapsi2}\\
(q^{(\alpha_i|\alpha_j)}v- u\gamma)  \Omega_{\phi_j}(u)x^-_i(v)&=(q^{(\alpha_i|\alpha_j)}v- u\gamma)\delta_{i,j}\delta(v/u\gamma)+( v-q^{(\alpha_i|\alpha_j)}u\gamma)x^-_i(v)\Omega_{\phi_j}(u)\label{omegaphi3}
\end{align}
which may be written out in terms of components as
\begin{align}
&q^{(\alpha_i|\alpha_k)}\gamma\Omega_{\psi_j}(m)x^-_{i,n+1}- \Omega_{\psi_j}(m+1)x^-_{i,n} \\
&\quad =(q^{(\alpha_i|\alpha_j)}\gamma-1)\delta_{i,j}\delta_{m,-n-1}+ \gamma x^-_{i,n+1}\Omega_{\psi_j}(m)-q^{(\alpha_i|\alpha_j)}x^-_{i,n}\Omega_{\psi_j}(m+1),
\label{omegapsi4}\\
&  q^{(\alpha_i|\alpha_j)}\Omega_{\phi_j}(m)x^-_{i,n+1}-  \gamma\Omega_{\phi_j}(m+1)x^-_{i,n} \\
&\qquad =(q^{(\alpha_i|\alpha_j)}- \gamma)\delta_{i,j}\delta_{m,-n-1}+ x^-_{i,n+1}\Omega_{\psi_j}(m)-q^{(\alpha_i|\alpha_j)}\gamma x^-_{i,n}\Omega_{\psi_j}(m+1),
\label{omegaphi5}
\end{align}

We can also write \eqnref{omegapsi} in terms of components and as operators on $\mathcal N_q^-$
\begin{equation}\label{omegapsi6}
    \Omega_{\psi_j}(k)x^-_{i,m}=\delta_{i,j}\delta_{k,-m}\gamma^{k}+\sum_{r\geq 0}g_{i,j,q^{-1}}(r)x^-_{i,m+r}\Omega_{\psi_j}(k-r)\gamma^{r}.
\end{equation}
    The sum on the right hand side turns into a finite sum when applied to an element in $\mathcal N_q^-$, due to \eqnref{omegalocalfin}.

  We also have by \eqnref{omegaphipsi}
  \begin{equation}\label{omegaphipsi2}
    \Omega_{\psi_i}(k)\Omega_{\phi_j}(m)= \sum_{r\geq 0}g_{i,j}(r)\gamma^{2r}\Omega_{\phi_j}(r+m)\Omega_{\psi_i}(k-r),
\end{equation}
as operators on $\mathcal N_q^-$.

\section{The Kashiwara algebra $\mathcal K_q$}  The Kashiwara algebra $\mathcal K_q$ is defined to be the $\mathbb F(q^{1/2})$-algebra with generators $\Omega_{\psi_j}(m),x_i^-(n),\gamma^{\pm 1/2}$, $m,n\in\mathbb Z$, $1\leq i,j\leq N$, where $\gamma^{\pm 1/2}$ are central and the defining relations are
\begin{align}
&q^{(\alpha_i|\alpha_j)}\gamma\Omega_{\psi_j}(m)x^-_{i,n+1}-  \Omega_{\psi_j}(m+1)x^-_{i,n} \\
&=(q^{(\alpha_i|\alpha_j)}\gamma-1)\delta_{i,j}\delta_{m,-n-1}+ \gamma x^-_{i,n+1}\Omega_{\psi_j}(m) -q^{(\alpha_i|\alpha_j)}x^-_{i,n}\Omega_{\psi_j}(m+1) \notag \\
&q^{(\alpha_i|\alpha_j)} \Omega_{\psi_i}(k+1)\Omega_{\psi_j}(l) -
\Omega_{\psi_j}(l)\Omega_{\psi_i}(k+1)\\  
&=  \Omega_{\psi_i}(k)\Omega_{\psi_j}(l+1)
    - q^{(\alpha_i|\alpha_j)}\Omega_{\psi_j}(l+1)\Omega_{\psi_i}(k)\notag\label{omegapsi3}
\end{align}
(which comes from \eqnref{omegapsi}, \eqnref{psipsi} written out in terms of components), and
\begin{equation}
x^{-}_{i,k+1}x^{-}_{j,l} - q^{- (\alpha_i|\alpha_j)}x^{-}_{j,l}x^{-}_{i,k+1}  = q^{- (\alpha_i|\alpha_j)}x^{-}_{i,k}x^{-}_{j,l+1}
    - x^{-}_{j,l+1}x^{-}_{i,k}\label{xminusreln}
\end{equation}
 together with
\[
\gamma^{1/2}\gamma^{-1/2}=1=\gamma^{-1/2}\gamma^{1/2}.
\]

\begin{lem}  The $\mathbb F(q^{1/2})$-linear map $\bar{\alpha}:\mathcal K_q\to \mathcal K_q$ given by
$$
\bar{\alpha}(\gamma^{\pm 1/2})=\gamma^{\pm 1/2},\quad \bar{\alpha}(x^-_{i,m})=\Omega_{\psi_i} (-m),\quad \bar{\alpha}(\Omega_{\psi_i} (m))=x^-_{i,-m}
$$
for all $m\in\mathbb Z$ is an involutive anti-automorphism.
\end{lem}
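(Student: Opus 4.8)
The plan is to present $\mathcal K_q$ as a quotient $F/J$, where $F$ is the free $\mathbb F(q^{1/2})$-algebra on the symbols $\gamma^{\pm 1/2}$, $x^-_{i,m}$, $\Omega_{\psi_i}(m)$ ($m\in\mathbb Z$, $1\leq i\leq N$) and $J$ is the two-sided ideal generated by the three defining relations listed above together with $\gamma^{1/2}\gamma^{-1/2}-1$ and the centrality relations for $\gamma^{\pm 1/2}$. On the free algebra $F$ there is a unique $\mathbb F(q^{1/2})$-linear anti-automorphism, still denoted $\bar\alpha$, taking the prescribed values on generators, since an anti-automorphism of a free algebra may be specified arbitrarily on generators (reversing the order of products). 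Because $\bar\alpha$ returns each generator to itself after two applications, $\bar\alpha^2$ is an algebra automorphism of $F$ that fixes every generator, hence $\bar\alpha^2=\mathrm{id}_F$. It therefore suffices to prove $\bar\alpha(J)\subseteq J$; applying $\bar\alpha$ once more then yields $J=\bar\alpha^2(J)\subseteq\bar\alpha(J)$, so $\bar\alpha(J)=J$ and $\bar\alpha$ descends to an involutive anti-automorphism of $\mathcal K_q$.

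To establish $\bar\alpha(J)\subseteq J$ I would verify that $\bar\alpha$ carries each generating relation of $J$ to a scalar multiple of another generating relation after a reindexing of parameters. The two \emph{homogeneous} families are interchanged. Writing out the image of the $x^-$-relation \eqnref{xminusreln} under $\bar\alpha$, and using that $\bar\alpha$ is $\mathbb F(q^{1/2})$-linear (so it fixes $q^{\pm(\alpha_i|\alpha_j)}$) and reverses products, one finds that the image is exactly $q^{-(\alpha_i|\alpha_j)}$ times the $\Omega_{\psi}$-commutation relation with parameters $(k,l)\mapsto(-k-1,-l-1)$; symmetrically, the image of the $\Omega_{\psi}$-relation is $q^{(\alpha_i|\alpha_j)}$ times the $x^-$-relation \eqnref{xminusreln} under the same reindexing. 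The relation $\gamma^{1/2}\gamma^{-1/2}-1$ and the centrality relations are manifestly preserved, since $\bar\alpha(\gamma^{\pm 1/2})=\gamma^{\pm 1/2}$ and $\gamma^{\pm 1/2}$ is central.

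The main computation, and the step I expect to require the most care, is the mixed relation (the first defining relation of $\mathcal K_q$), which involves both types of generators, the central factor $\gamma$, and the inhomogeneous Kronecker term $(q^{(\alpha_i|\alpha_j)}\gamma-1)\delta_{i,j}\delta_{m,-n-1}$. Applying $\bar\alpha$ term by term, and using $\bar\alpha(\gamma)=\gamma$, the centrality of $\gamma$, and the symmetry $(\alpha_i|\alpha_j)=(\alpha_j|\alpha_i)$ of the form so that the scalar prefactor is unchanged when the roles of $i$ and $j$ are exchanged, I expect to obtain precisely the same mixed relation with $(i,j,m,n)\mapsto(j,i,-n-1,-m-1)$. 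The delicate points are that the inhomogeneous term is invariant under this substitution (indeed $\delta_{m,-n-1}$ becomes $\delta_{-n-1,m}$ and $\delta_{i,j}$ becomes $\delta_{j,i}$, both unchanged), and that the two ``quantum commutator'' halves appearing on the left and right of the relation are matched correctly once products are reversed and $\gamma$ is commuted past. Once the images of all generators of $J$ are shown to lie in $J$, the argument of the first paragraph gives $\bar\alpha(J)=J$, and the lemma follows.
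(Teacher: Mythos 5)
Your proposal is correct and follows essentially the same route as the paper: the paper's proof is exactly the verification that $\bar\alpha$ sends the $x^-$-Serre relation to (a scalar multiple of) an instance of the $\Omega_{\psi}$-relation and sends the mixed relation to a reindexed instance of itself, with the free-algebra/ideal scaffolding and the reduction to the single inclusion $\bar\alpha(J)\subseteq J$ via involutivity left implicit. The reindexings you predict check out against the computations in the paper's proof.
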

\begin{proof}
We have
\begin{align*}
\bar{\alpha}&\left(x^-_{i,k+1}x^-_{j,l} - q^{- (\alpha_i|\alpha_j)}x^-_{j,l}x^-_{i,k+1}  \right) \\
&=\Omega_{\psi_j}(-l)\Omega_{\psi_i}(-k-1) -q^{-(\alpha_i|\alpha_j)}\Omega_{\psi_i}(-k-1)\Omega_{\psi_j}(-l)  \\
&=q^{-(\alpha_i|\alpha_j)}\Omega_{\psi_j}(-l-1)\Omega_{\psi_i}(-k) -\Omega_{\psi_i}(-k)\Omega_{\psi_j}(-l-1)  \\
&=\bar{\alpha}\left(q^{-(\alpha_i|\alpha_j)}x^-_{i,k}x^-_{j,l+1}-x^-_{j,l+1}x^-_{i,k}\right)
\end{align*}
and
\begin{align*}
\bar{\alpha}&\left(q^{(\alpha_i|\alpha_j)}\gamma\Omega_{\psi_j}(m)x^-_{i,n+1}-\Omega_{\psi_j}(m+1)x^-_{i,n}\right) \\
    &=q^{(\alpha_i|\alpha_j)}\gamma\Omega_{\psi_i}(-n-1)x^-_{j,-m}-\Omega_{\psi_i}(-n)x^-_{j,-m-1}  \\
    &=(q^{(\alpha_i|\alpha_j)}\gamma-1)\delta_{i,j}\delta_{-m,n+1}+ \gamma x^-_{j,-m}\Omega_{\psi_i}(-n-1)-q^{(\alpha_i|\alpha_j)}x^-_{j,-m-1}\Omega_{\psi_i}(-n) \\
    &=\bar{\alpha}\left((q^{(\alpha_i|\alpha_j)}\gamma-1)\delta_{i,j}\delta_{m,-n-1}+ \gamma x^-_{i,n+1}\Omega_{\psi_j}(m)-q^{(\alpha_i|\alpha_j)}x^-_{i,n}\Omega_{\psi_j}(m+1)\right)
\end{align*}
\end{proof}

\begin{lem}  
 $\mathcal N_q^-$ is a left $\mathcal K_q$-module and 
$$\mathcal N_q^-\cong \mathcal K_q/\sum_{i=1}^N\sum_{k\in\mathbb Z}\mathcal K_q\Omega_{\psi_i}(k).$$
\end{lem}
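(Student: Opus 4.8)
The plan is to realize $\mathcal N_q^-$ as a cyclic $\mathcal K_q$-module generated by $1$ and to identify the annihilator of $1$ with the left ideal $\mathcal I:=\sum_{i=1}^N\sum_{k\in\mathbb Z}\mathcal K_q\Omega_{\psi_i}(k)$. First I would verify the module structure: let $x^-_{i,m}$ act on $\mathcal N_q^-$ by left multiplication and let $\Omega_{\psi_i}(k)$ act by the operators of \eqnref{definingomegapsi} (well defined by \lemref{Plemma}), with $\gamma^{\pm1/2}$ acting as central invertible scalars. All defining relations of $\mathcal K_q$ then hold: relation \eqnref{xminusreln} is the relation \eqnref{Serre} already satisfied by the $x^-_{i,m}$ in $\mathcal N_q^-$, the mixed relation is \eqnref{omegapsi4}, and the relation \eqnref{omegapsi3} among the $\Omega_{\psi_i}(k)$ is the component form of \eqnref{psipsi} in \propref{commutatorprop}. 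Thus we obtain an algebra homomorphism $\mathcal K_q\to\mathrm{End}(\mathcal N_q^-)$ making $\mathcal N_q^-$ a left $\mathcal K_q$-module.

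Since $x^-_{i,m}\cdot1=x^-_{i,m}$ and $\gamma^{\pm1/2}\cdot1=\gamma^{\pm1/2}$ generate $\mathcal N_q^-$ as an algebra, the vector $1$ is cyclic and $u\mapsto u\cdot1$ is a surjective $\mathcal K_q$-module map $\mathcal K_q\to\mathcal N_q^-$. Because $\Omega_{\psi_i}(u)(1)=0$ for every $i$, each generator of $\mathcal I$ annihilates $1$, so $\mathcal I$ is contained in the kernel and we get a surjection $\bar\pi:\mathcal K_q/\mathcal I\to\mathcal N_q^-$. The whole problem is thus reduced to proving $\bar\pi$ injective, i.e. that the kernel of $u\mapsto u\cdot1$ is exactly $\mathcal I$.

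For injectivity I would establish a triangular normal form: modulo $\mathcal I$, every element of $\mathcal K_q$ should be a combination of ordered monomials in the $x^-_{i,m}$ and $\gamma^{\pm1/2}$ alone. Granting this, the PBW basis of \thmref{mainresult} restricts to a basis of $\mathcal N_q^-$ consisting of such monomials, and $\bar\pi$ sends our normal-form monomials onto it; since their images are linearly independent, the monomials themselves are linearly independent in $\mathcal K_q/\mathcal I$, hence a basis, and $\bar\pi$ is an isomorphism. To extract the normal form one pushes every $\Omega_{\psi_j}(k)$ to the right using the mixed relation \eqnref{omegapsi4}: any monomial whose rightmost non-central factor is an $\Omega_{\psi_i}(k)$ already lies in $\mathcal I$, while \eqnref{omegapsi4} rewrites an occurrence of $\Omega_{\psi_j}(m)x^-_{i,n}$ as a scalar $\delta$-term carrying one fewer $\Omega$-factor together with terms in which an $\Omega$ has moved one step to the right.

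The main obstacle is that this rewriting is not terminating in the naive sense: \eqnref{omegapsi4} also reproduces a term $\Omega_{\psi_j}(m+1)x^-_{i,n-1}$ in which the $\Omega$ remains to the left of an $x^-$ with the indices merely shifted, and each defining relation is homogeneous both for the weight grading on $Q_0\oplus\mathbb Z\delta$ and for the grading by the number of $x^-$-factors minus the number of $\Omega$-factors, so neither grading bounds the procedure. The key to closing this gap is the local finiteness \eqnref{omegalocalfin}: on any fixed element of $\mathcal N_q^-$ the operators $\Omega_{\psi_j}(M)$ vanish for all but finitely many $M$, so the geometric series \eqnref{omegapsi6} truncates. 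I would use \eqnref{omegalocalfin} to show that the shifted tail $\Omega_{\psi_j}(m+t)x^-_{i,n-t}$ contributes trivially once $t$ is large, rendering the reduction effectively finite and producing the normal form, with the uniqueness clause of \lemref{Plemma} ensuring that the resulting $x^-\gamma$-expansions are unambiguous. Making this truncation rigorous inside $\mathcal K_q/\mathcal I$, rather than merely in the module action where \eqnref{omegalocalfin} lives, is where the real work lies.
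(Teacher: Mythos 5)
Your overall route is the paper's: make $\mathcal N_q^-$ a left $\mathcal K_q$-module via \propref{commutatorprop}, observe that $1$ is a cyclic vector annihilated by every $\Omega_{\psi_i}(k)$, and reduce the whole statement to the injectivity of the induced surjection $\eta\colon\mathcal K_q/\mathcal I\to\mathcal N_q^-$, where $\mathcal I=\sum_{i,k}\mathcal K_q\Omega_{\psi_i}(k)$. You diverge only at the last step. You would finish by producing a spanning set of ordered $x^-$-monomials for $\mathcal K_q/\mathcal I$ and matching it against the PBW basis of \thmref{mainresult}. The paper instead takes the subalgebra $C\subseteq\mathcal K_q$ generated by the $x^-_{i,m}$ and $\gamma^{\pm1/2}$, asserts that $\mu\colon C\to\mathcal K_q/\mathcal I$ is surjective, and then uses that $\mathcal N_q^-$ is presented by the generators $x^-_{i,n},\gamma^{\pm1/2}$ and the relations \eqnref{Serre} (which hold in $C$ as \eqnref{xminusreln}) to construct a splitting $\nu\colon\mathcal N_q^-\to C$ with $\nu\circ\eta\circ\mu=\mathrm{id}_C$; this forces $\eta\circ\mu$, and hence $\eta$, to be an isomorphism. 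The splitting is a lighter tool than the full PBW theorem, since it needs only the presentation of $\mathcal N_q^-$; otherwise the two arguments carry the same content.

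The step you single out as ``where the real work lies'' --- that modulo $\mathcal I$ every element of $\mathcal K_q$ is a polynomial in the $x^-_{i,m}$ and $\gamma^{\pm1/2}$, i.e.\ the surjectivity of $\mu$ --- is exactly the step the paper states without any argument. Your diagnosis of the difficulty is accurate: the defining relation of $\mathcal K_q$ coming from \eqnref{omegapsi4} ties the two ``unstraightened'' words $\Omega_{\psi_j}(m)x^-_{i,n+1}$ and $\Omega_{\psi_j}(m+1)x^-_{i,n}$ to one another (plus straightened terms and a scalar), so by itself it only shows that, modulo $C+\mathcal I$, the classes of the words $\Omega_{\psi_j}(m)x^-_{i,n}$ with $m+n$ fixed are proportional; it does not show they vanish. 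The resolved form \eqnref{omegapsi6} is an infinite sum that is meaningful only through the local finiteness \eqnref{omegalocalfin}, which is a statement about the action on $\mathcal N_q^-$, not about the abstract algebra $\mathcal K_q$. So you have not fallen short of the paper's own proof here --- you have correctly located a point it glosses over --- but neither your proposal as written nor the paper's splitting device actually closes that point, and any complete argument would have to supply it (for instance by establishing a triangular spanning set for $\mathcal K_q$ directly from its presentation).
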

\begin{proof}
Proposition~\ref{commutatorprop} implies that  $\mathcal N_q^-$ is a left $\mathcal K_q$-module.
We have an induced left $\mathcal K_q$-module epimomorphism from $\mathcal K_q$ to $\mathcal N_q^-$ which sends $1$ to $1$.  Since the $\Omega_{\psi_i}(k)$ annihilates $1$ for all $k$ and $1\leq i\leq N$, we get an induced left  $\mathcal K_q^-$-module epimomorphism
\[
\begin{CD}
\mathcal K_q/\sum_{i=1}^N\sum_{k\in\mathbb Z}\mathcal K_q\Omega_{\psi_i}(k)@>\eta>> \mathcal N_q^-
\end{CD}
\]
Let  $C$ denote the subalgebra of $\mathcal K_q$ generated by $x^-_{i,m},\gamma^{\pm1/2}$.  Then we have a surjective homomorphism
\[
\begin{CD}
C @>\mu>>\mathcal K_q/\sum_{i=1}^N\sum_{k\in\mathbb Z}\mathcal K_q\Omega_{\psi_i}(k)
\end{CD}
\]
The composition $\eta\circ \mu$ is surjective and since $\mathcal N_q^-$ is defined by generators $x^-_{i,n}$, $1\leq i\leq N$, $\gamma^{\pm 1/2}$ and relations \eqnref{Serre}, we get an induced map $\nu:\mathcal N_q^-\to C$ splitting the surjective map $\eta\circ \mu$. Since the composition $\nu\circ \eta\circ \mu$ is the identity, we get that $\eta\circ\mu$ is an isomorphism and thus $\eta$ is an isomorphism.
\end{proof}
\begin{prop}\label{form}
There is a unique symmetric form $(\enspace, \enspace)$ defined on $\mathcal N^-_q$ satisfying
$$
(x^-_{i,m}a,b)=(a,\Omega_{\psi_i}(-m)b),\quad (1,1)=1.
$$
\end{prop}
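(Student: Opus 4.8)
The plan is to build the form explicitly from the anti-automorphism $\bar\alpha$ of the preceding Lemma together with a ``vacuum'' functional, to read off the two defining properties essentially for free, and to isolate symmetry as the one substantive point. Throughout I use that $\mathcal N_q^-$ is a cyclic $\mathcal K_q$-module generated by $1$, so that every $a\in\mathcal N_q^-$ equals $u_a\cdot 1$ for a unique element $u_a$ of the subalgebra $C$ generated by the $x^-_{i,m}$ and $\gamma^{\pm1/2}$.

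First I would record the mechanism forcing uniqueness. Since the $\gamma^{\pm1/2}$ are central we may write $u_a=\gamma^{t/2}u_{a,0}$ with $u_{a,0}$ a monomial in the $x^-_{i,m}$. Iterating the required relation $(x^-_{i,m}a,b)=(a,\Omega_{\psi_i}(-m)b)$ to move each $x^-$-factor of $u_{a,0}$ across the form gives $(a,b)=(\gamma^{t/2},\ \bar\alpha(u_{a,0})\cdot b)$, where $\bar\alpha(u_{a,0})$ acts through the $\mathcal K_q$-module structure. Because $\Omega_{\psi_i}(k)\cdot 1=0$, symmetry together with the defining relation forces
\[
(\gamma^{t/2},x^-_{i,m}c)=(x^-_{i,m}c,\gamma^{t/2})=(c,\Omega_{\psi_i}(-m)\gamma^{t/2}\cdot 1)=0,
\]
so the form annihilates every element carrying an $x^-$-factor once the second slot is $\gamma^{t/2}$. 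Hence the form is determined by its values on the central, $x^-$-free part, and these are pinned down by $(1,1)=1$ together with the self-adjointness of $\gamma^{\pm1/2}$ (compatible with $\bar\alpha(\gamma^{\pm1/2})=\gamma^{\pm1/2}$). This shows there is at most one such form and simultaneously suggests its construction.

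For existence I would define $(a,b):=\langle\,\bar\alpha(u_a)\cdot b\,\rangle$, where $\langle\,\cdot\,\rangle:\mathcal N_q^-\to\mathbb F(q^{1/2})$ is the projection onto the span of the trivial monomial supplied by the PBW basis of \thmref{pbwthm}. Well-definedness is immediate from the identification $C\cong\mathcal N_q^-$, $u\mapsto u\cdot 1$. Since $\bar\alpha$ is an anti-automorphism with $\bar\alpha(x^-_{i,m})=\Omega_{\psi_i}(-m)$, one has $\bar\alpha(x^-_{i,m}u_a)=\bar\alpha(u_a)\,\Omega_{\psi_i}(-m)$, whence
\[
(x^-_{i,m}a,b)=\langle\bar\alpha(u_a)\,\Omega_{\psi_i}(-m)\cdot b\rangle=(a,\Omega_{\psi_i}(-m)b),
\]
and $(1,1)=\langle 1\rangle=1$. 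Thus the adjointness relation and the normalization hold by construction, with no computation.

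The hard part will be symmetry, $(a,b)=(b,a)$. Writing $a=u_a\cdot1$, $b=u_b\cdot1$ and using that $\bar\alpha$ is involutive, symmetry is equivalent to the assertion that the vacuum functional $\tau(X):=\langle X\cdot 1\rangle$ on $\mathcal K_q$ satisfies $\tau=\tau\circ\bar\alpha$, since $\bar\alpha\bigl(\bar\alpha(u_a)u_b\bigr)=\bar\alpha(u_b)u_a$. To prove this I would argue by induction on the number of generators, pushing the rightmost $\Omega_{\psi_i}(-m)$ of $\bar\alpha(u_a)$ to the right through the $x^-$-factors of $b$ via the commutation relation \eqnref{omegapsi6}; each pass produces a contraction term $\delta_{i,j}\delta_{m,n}\gamma^{-m}$ together with a reordering factor, while the uncontracted tail annihilates $1$, and the sums are finite by \eqnref{omegalocalfin}. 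This expresses $\tau(\bar\alpha(u_a)u_b)$ as a sum over complete pairings of the $x^-$-letters of $u_a$ with those of $u_b$, and the crux is that the contraction coefficients coming from \eqnref{omegapsi6} are exactly those for which interchanging the roles of $u_a$ and $u_b$ leaves the total unchanged. This is precisely the content encoded in \propref{commutatorprop}, where the $\Omega$-relations were manufactured as the adjoints of $x^-$-multiplication. Verifying this symmetry of the pairing sum, together with the bookkeeping of the $\gamma$-powers, is the one genuine obstacle; once it is in hand, $\tau\circ\bar\alpha=\tau$ and hence symmetry of the form follow, completing the proof.
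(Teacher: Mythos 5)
Your construction of the form and your verification of the adjointness relation and the normalization coincide with the paper's: the functional $\langle\,\cdot\,\rangle$ is the paper's $\beta_0$, and $(a,b)=\langle\bar\alpha(u_a)\cdot b\rangle$ is exactly $(u_a\cdot\beta_0)(b)$ for the dual module structure twisted by $\bar\alpha$. The genuine gap is symmetry. You correctly identify it as the one substantive point, but you do not prove it: you only sketch a pairing/contraction computation via \eqnref{omegapsi6} and \eqnref{omegalocalfin} and explicitly defer ``the one genuine obstacle.'' That computation is not routine bookkeeping --- the reordering factors $g_{i,j,q^{-1}}$ accumulate in an order-dependent way, and nothing in your outline shows why the resulting sum over pairings is invariant under exchanging $u_a$ and $u_b$ --- so as written the proposal is an incomplete proof.

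The paper avoids this computation entirely. Because $\bar\beta:\mathcal N_q^-\to\Hom(\mathcal N_q^-,\mathbb F(q^{1/2}))$ is a homomorphism of $\mathcal K_q$-modules, the constructed form satisfies not only $(x^-_{i,m}a,b)=(a,\Omega_{\psi_i}(-m)b)$ but also $(\Omega_{\psi_i}(m)a,b)=(a,x^-_{i,-m}b)$ and $(\gamma^{\pm1/2}a,b)=(a,\gamma^{\pm1/2}b)$; a form satisfying all three conditions together with $(1,1)=1$ is unique \emph{without any symmetry hypothesis} (one slides the factors of $u_a$ to the right and then uses $(\,\Omega_{\psi_i}(-n)\cdot 1,d)=0$ to kill $(1,x^-_{j,n}d)$). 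Since the transposed form $(a,b)':=(b,a)$ satisfies the same three conditions and the same normalization, uniqueness forces $(a,b)'=(a,b)$, which is symmetry. Note that your version of uniqueness cannot be used this way, because you invoke symmetry inside the uniqueness argument (``symmetry together with the defining relation forces\dots''), so it only pins down the form among symmetric forms. To close the gap within your framework, upgrade your uniqueness statement to cover all forms satisfying the full set of adjointness relations, verify that your constructed form satisfies the two additional ones (this is immediate from the module-homomorphism property, not from a computation), and then apply the transposition trick.
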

\begin{proof}  Using the anti-automorphism $\bar\alpha$ we can make $M=\text{Hom}(\mathcal N^-_q,\mathbb F(q^{1/2}))$ into a left $\mathcal K_q$-module by defining
\begin{gather*}
(x^-_{i,m}f)(a)=f(\Omega_{\psi_i}(-m)a),\enspace (\Omega_{\psi_i}(m)f)(a)=f(x^-_{i,-m}a), \\
(\gamma^{\pm 1/2}f)(a)=f(\gamma^{\pm 1/2}a).
\end{gather*}
for $a\in \mathcal N_q^-$ and $f\in M$.

Consider the element $\beta_0\in M$ satisfying $\beta_0(1)=1$ and
$$
\beta_0\left(\sum_{i=1}^N\sum_{k\in\mathbb Z}x^-_{i,m}\mathcal K_q\right)=0.
$$
Then $\Omega_{\psi_i}(m)\beta_0=0$ for any $m\in\mathbb Z$, $1\leq i\leq N$, we get an induced homomorphism of $\mathcal K_q$-modules 
$$
\bar\beta:\mathcal N_q^-\cong \mathcal K_q/\sum_{i=1}^N\sum_{m\in\mathbb Z}\mathcal K_q\Omega_{\psi_i}(m)\to M,
$$
where $\bar\beta(1)=\beta_0$.
Define the bilinear form $(\enspace,\enspace):\mathcal N_q^-\times \mathcal N_q^-:\to \mathbb F(q^{1/2})$ by
$$
(a,b)=(\bar\beta(a))(b)
$$
This form satisfies $(1,1)=1$ and
\begin{gather*}
(x^-_{i,m}a,b)=(a,\Omega_{\psi_i}(-m)b),\quad (\Omega_{\psi_i}(m)a,b)=(a,x^-_{i,-m}b), \\ (\gamma^{\pm/2}a,b)=(a,\gamma^{\pm    /2}b).
\end{gather*}
Since $\mathcal N_q^-$ is generated by $x^-_{i,m}$ and $\gamma^{\pm 1/2}$ we get that the form is the unique form satisfying these three conditions.  The form is symmetric since the form defined by $(a,b)'=(b,a)$ also satisfies the above conditions.
\end{proof}

\section{Simplicity of $\mathcal N_q^-$ as a $\mathcal K_q$-module}

We will show that $\mathcal N_q^-$ is simple as a module over
$\mathcal K_q$.

\begin{lem}\label{le-primitive}
Let $P\in \mathcal N_q^-$. If $\Omega_{\psi_i}(s)P=0$ for all $s\in
\mathbb Z$  and all $1\leq i\leq N$, then $P$ is a constant multiple of $1$.

\end{lem}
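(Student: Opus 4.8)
The plan is to show that a nonzero element of nonzero weight is always moved by some $\Omega_{\psi_i}(s)$, so that the hypothesis forces $P$ to have weight zero. First I would exploit the two gradings carried by $\mathcal N_q^-$: the $Q_{0,+}$-weight grading in which $x^-_{i,l}$ has weight $\alpha_i$, and the $D$-degree grading in which $x^-_{i,l}$ has degree $l$. The relations \eqnref{xminusreln} are homogeneous for both, and by the component form \eqnref{omegapsi6} each $\Omega_{\psi_j}(s)$ is homogeneous of bidegree $(-\alpha_j,s)$. Hence $\Omega_{\psi_j}(s)P=0$ for all $j,s$ holds if and only if it holds on each bihomogeneous component of $P$, so I may assume $P$ is homogeneous of weight $\mu$ and $D$-degree $d$. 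If $\mu=0$ then $P$ involves no factor $x^-_{i,l}$ and lies in $\mathbb{F}(q^{1/2})[\gamma^{\pm1/2}]$, i.e. is a (central) scalar multiple of $1$, which is the desired conclusion; it remains to rule out $\mu\neq 0$.

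Next I would convert \eqnref{omegapsi6} into an explicit ``removal'' formula. Since $\Omega_{\psi_j}(s)\cdot 1=0$ and $\gamma^{\pm1/2}$ is central, iterating \eqnref{omegapsi6} past a monomial shows that $\Omega_{\psi_j}(s)$ applied to $x^-_{i_1,l_1}\cdots x^-_{i_h,l_h}$ equals $\gamma^s$ times a sum, over the positions $p$ with $i_p=j$ and over shifts $r_1,\dots,r_{p-1}\ge 0$ with $r_1+\dots+r_{p-1}=s+l_p$, of a product of Taylor coefficients $g_{i_q,j,q^{-1}}(r_q)$ multiplying the monomial in which the factors to the left of $p$ have their $D$-degrees raised by the $r_q$ and the $p$-th factor is deleted. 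Here $h=\mathrm{ht}(\mu)$ is constant on the weight-$\mu$ component, the inner sums are finite by \eqnref{omegalocalfin}, and $g_{i,j,q^{-1}}(0)\neq 0$. The decisive structural point is that the $D$-degrees of the surviving factors are only ever raised, never lowered.

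The core of the argument is then a leading-term extraction. I would fix a total order on the generators $x^-_{i,l}$ refining the convex root order of \secref{drinfeldgen}, so that \thmref{pbwthm} (together with \eqnref{convexity}) supplies a PBW basis of ordered monomials, and let $l^\ast$ be the smallest $D$-degree occurring among the finitely many factors of the finitely many monomials of $P$, with $i^\ast$ a colour realizing it. Applying $\Omega_{\psi_{i^\ast}}(-l^\ast)$, the constraint $r_1+\dots+r_{p-1}=l_p-l^\ast\ge 0$ shows that the only shift-free contributions come from deleting a factor of minimal degree $l^\ast$ and colour $i^\ast$, while every term arising from a higher shift or from a removal at another position strictly raises some degree above $l^\ast$. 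Ordering the resulting bihomogeneous monomials (of weight $\mu-\alpha_{i^\ast}$ and degree $d-l^\ast$) by their degree sequences and re-expanding in the PBW basis via \eqnref{xminusreln}, I expect the shift-free contribution to single out a unique extremal PBW monomial receiving no competing contribution, so that its coefficient is a nonzero multiple of the corresponding coefficient of $P$; hence $\Omega_{\psi_{i^\ast}}(-l^\ast)P\neq 0$, contradicting the hypothesis.

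The main obstacle is precisely this non-cancellation claim. Unlike the rank-one case of \cite{CFM10}, where the action can be written out in closed form, here a single resulting monomial may in principle be reached from several monomials of $P$, and the degree-raising shifts push monomials out of PBW order, forcing a re-expansion through the convexity relation \eqnref{convexity} whose lower-order correction terms must be shown not to interfere with the chosen extremal term. Arranging the total order and the choice of extremal monomial to be compatible with both the colour and the degree bookkeeping, so that the pairing between the $x^-_{i,l}$ and the $\Omega_{\psi_j}(s)$ becomes genuinely triangular, is the step demanding care; once it is in place the contradiction, and hence the lemma, follows.
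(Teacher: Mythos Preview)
Your leading-term extraction is not a proof: you yourself flag the non-cancellation claim as ``the main obstacle'' and then only say you ``expect'' it to hold. That claim is the entire content of the lemma. Concretely, nothing you have written prevents several monomials of $P$ from each carrying a factor $x^-_{i^\ast,l^\ast}$ of the same minimal degree and colour; after deletion these may land on the same PBW monomial with cancelling coefficients. Worse, the re-expansion of the shifted terms through \eqnref{convexity} (and through the cubic Serre-type relations \eqnref{drinfeldlast}, which you do not mention at all) can produce correction terms hitting your candidate extremal monomial, since those relations are homogeneous in both gradings and do not move things in a single direction with respect to your order. The paper itself warns in the introduction that the explicit computations of \cite{CFM10} do not extend beyond $\widehat{\mathfrak{sl}_2}$; your sketch is exactly the kind of argument whose triangularity is clear in rank one but is not established here.

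The paper avoids all of this combinatorics by a structural argument that brings in two ingredients absent from your proposal: the operators $\Omega_{\phi_j}$ and the simplicity of the reduced imaginary Verma module. Inducting on $|\xi|=\mathrm{ht}(\mu)$, one uses \eqnref{omegaphipsi2} to see that $\Omega_{\psi_i}(k)P=0$ for all $i,k$ forces $\Omega_{\psi_i}(k)\Omega_{\phi_j}(m)P=0$ for all $i,j,k,m$; by the induction hypothesis applied to $\Omega_{\phi_j}(m)P$ (which has height $|\xi|-1$) this gives $\Omega_{\phi_j}(m)P=0$ as well. Then \eqnref{xplusP} yields $[x^+_{i,m},P]=0$ for all $i,m$. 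Choosing $\lambda$ with $\lambda(c)=0$ and all $\lambda(h_i)\neq 0$, the vector $Pv_\lambda\in\tilde M_q(\lambda)$ is nonzero by \thmref{pbwthm} but annihilated by every $x^+_{i,s}$, so $\mathcal N_q^- Pv_\lambda$ is a nonzero proper submodule, contradicting Theorem~\ref{thm-imag-irred}.
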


\begin{proof}
We may assume without loss of generality that $P$ is a homogeneous
element, say $P\in (\mathcal N_q^-)_{\lambda-\xi}$. We assume
that $\xi\neq 0$. Then $\xi=\sum_{i=1}^N n_i\alpha_i+m\delta$, $n_i\geq 0$, $\sum_i n_i^2\neq 0$,  $m\in \mathbb
Z$. Set $|\xi|=n = \sum_i n_i$. We shall prove the lemma by  induction on
$|\xi|$.

Suppose $|\xi|=1$. Then $P=x^-_{i,m}$ for some $i$ and
\begin{align*}
    \Omega_{\psi_j}(s)(x^-_{i,m})&=\delta_{i,j}\delta_{s,-m}\gamma^{s}+\sum_{r'\geq 0}g_{i,j,q^{-1'}}(r)x^-_{i,m+r'}\Omega_{\psi_j}(s-r')\gamma^{r'}1 \\
    &=\delta_{i,j}\delta_{s,-m}\gamma^{s}
\end{align*}
Hence $\Omega_{\psi_i}(-m)(P)\neq 0$ unless $P=0$.

Suppose $|\xi|>1$.  We have by hypothesis $\Omega_{\psi_i}(l)(P)=0$ for any $l\in\mathbb Z$ and all $1\leq i\leq N$. Then we use 
\eqnref{omegaphipsi2} so that for all $k$, $m\in\mathbb Z$, and  $1\leq i,j\leq N$ we get
  \begin{equation}
    \Omega_{\psi_i}(k)\Omega_{\phi_j}(m)(P)= \sum_{r\geq 0}g_{i,j}(r)\gamma^{2r}\Omega_{\phi_j}(r+m)\Omega_{\psi_i}(k-r)(P)=0.
\end{equation}

Hence by the induction hypothesis $\Omega_{\phi_i}(m)(P)=0$ as $\Omega_{\phi_i}(m)(P) \in (\mathcal N_q^-)_{\lambda-\xi+1}$.
Then $[x^+_{i,m},P]=0$ by \eqnref{xplusP}.

Consider the imaginary Verma module
$M_q^r(\lambda)$ with $\lambda(c)=0$ and choose $\lambda$ such that $\lambda(h_i)\neq 0$ for some $h_i\in\mathfrak h$. Then $\tilde{M}_q(\lambda)$ is the unique irreducible quotient of $M_q^r(\lambda)$
 and $v=Pv_{\lambda}$ is a nonzero element of the module $\tilde{M}_q(\lambda)$.

\color{black}
Thus
$$x^+_{i,s}v=[x^+_{i,s}, P]\tilde{v}_{\lambda}+ Px^+_{i,s} \tilde{v}_{\lambda}=0$$
for all $s\in \mathbb Z$ and all $1\leq i\leq N$.

Consider $V= \mathcal N_q^-v\subset \tilde{M}_q(\lambda)$. Then $V$ is a nonzero proper submodule
of $\tilde{M}_q(\lambda)$ which is a contradiction by Theorem~\ref{thm-imag-irred}.
This completes the proof.
\end{proof}

 Lemma~\ref{le-primitive} implies immediately the following result.

\begin{thm}
The algebra $\mathcal N_q^-$ is simple as a $\mathcal K_q$-module.
\end{thm}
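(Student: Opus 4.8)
The plan is to deduce simplicity directly from \lemref{le-primitive}, exploiting that $\mathcal N_q^-$ is a \emph{cyclic} $\mathcal K_q$-module. Indeed, the isomorphism $\mathcal N_q^-\cong \mathcal K_q/\sum_{i,k}\mathcal K_q\Omega_{\psi_i}(k)$ established just above sends $1$ to $1$, so $\mathcal K_q\cdot 1=\mathcal N_q^-$. Hence it suffices to prove that every nonzero $\mathcal K_q$-submodule $M\subseteq \mathcal N_q^-$ contains the generator $1$, for then $M\supseteq \mathcal K_q\cdot 1=\mathcal N_q^-$. I would organize the argument around the $\mathbb Z_{\ge 0}$-grading of $\mathcal N_q^-$ by the number of $x^-$-factors (equivalently the $Q_0$-height $|\xi|$ appearing in \lemref{le-primitive}): each $x^-_{i,m}$ raises this degree by one, each $\Omega_{\psi_i}(m)$ lowers it by one, and the degree-zero part is $\mathbb F(q^{1/2})\cdot 1$.

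First I would show that $M$ meets the degree-zero part. Choose a nonzero $w\in M$ whose lowest nonzero homogeneous component lies in the smallest possible degree $n_0$, and call that component $L$. If $n_0>0$, then for every $i$ and $m$ the element $\Omega_{\psi_i}(m)w\in M$ can have no component in degree $n_0-1$ by minimality of $n_0$, so its potential degree-$(n_0-1)$ term $\Omega_{\psi_i}(m)L$ must vanish; thus $L$ is annihilated by all $\Omega_{\psi_i}(m)$. Decomposing $L$ into genuine weight components and using that each $\Omega_{\psi_i}(m)$ is weight-homogeneous, every weight component of $L$ is annihilated by all the $\Omega_{\psi_i}(m)$, whence each is a scalar multiple of $1$ by \lemref{le-primitive}. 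Since these components have positive degree $n_0$, they must all vanish, forcing $L=0$, a contradiction. Therefore $n_0=0$, i.e. $M$ contains an element with nonzero degree-zero component.

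It remains to extract $1$ itself. Among all elements of $M$ with nonzero degree-zero component, choose $w$ of minimal top degree $p$. If $p=0$ then $w\in \mathbb F(q^{1/2})^{\times}\cdot 1$ and we are done. If $p\ge 1$, let $a\ge 1$ be the least positive degree occurring in $w$, with nonzero homogeneous component $w_a$. Since $w_a$ is a nonzero homogeneous element of positive degree, \lemref{le-primitive} (applied after a weight decomposition, exactly as in the previous step) guarantees it is not killed by all the $\Omega_{\psi_i}(m)$, so one can apply a length-$a$ product $\omega$ of such operators with $\omega w_a\in \mathbb F(q^{1/2})^{\times}\cdot 1$. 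Because $\omega$ annihilates $1$ and lowers all degrees by $a$, the element $\omega w\in M$ again has nonzero degree-zero component but strictly smaller top degree $p-a<p$, contradicting minimality. Hence $p=0$ and $1\in M$, which proves that $\mathcal N_q^-$ is simple.

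The main obstacle is that a $\mathcal K_q$-submodule of $\mathcal N_q^-$ need not be homogeneous for this grading, since $\mathcal K_q$ contains no toral height operator; one therefore cannot simply select a homogeneous element on which to run the lowering procedure. The two nested minimality arguments above, first on the lowest occurring degree and then on the top degree, together with the weight decomposition that renders \lemref{le-primitive} applicable componentwise, are precisely what circumvents this. Throughout, $\gamma^{1/2}$ acts as the scalar $1$ in the realization underlying \lemref{le-primitive}, where $\lambda(c)=0$; this is what makes the degree-zero space one-dimensional and keeps the entire reduction inside the setting governed by \thmref{thm-imag-irred}.
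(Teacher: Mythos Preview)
Your argument is correct and follows exactly the route the paper intends: the paper's proof is the single assertion that \lemref{le-primitive} implies the result, and you have supplied the reduction showing that any nonzero $\mathcal K_q$-submodule must contain the cyclic generator $1$. Your two nested minimality arguments could in fact be collapsed into a single top-degree descent (apply \lemref{le-primitive}, via a weight decomposition, to the \emph{top} homogeneous component of any nonzero $w\in M$ to find an $\Omega_{\psi_i}(m)$ that strictly lowers the top degree, and iterate), but the version you wrote is certainly valid.
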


\begin{cor}  The form $(\enspace,\enspace)$ defined in \propref{form} is non-degenerate.
\end{cor}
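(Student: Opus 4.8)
The plan is to identify the radical of the form with a $\mathcal K_q$-submodule of $\mathcal N_q^-$ and then invoke the simplicity established in the preceding theorem. I would set
$$
\mathrm{Rad} = \{\, a \in \mathcal N_q^- \mid (a,b) = 0 \text{ for all } b \in \mathcal N_q^- \,\},
$$
which is unambiguous as a left or right notion because the form is symmetric by \propref{form}.

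The key step is to check that $\mathrm{Rad}$ is stable under $\mathcal K_q$. The three adjointness relations recorded in \propref{form}, namely $(x^-_{i,m}a,b) = (a,\Omega_{\psi_i}(-m)b)$, $(\Omega_{\psi_i}(m)a,b) = (a,x^-_{i,-m}b)$ and $(\gamma^{\pm 1/2}a,b) = (a,\gamma^{\pm 1/2}b)$, assert exactly that the form is contravariant with respect to the involutive anti-automorphism $\bar\alpha$ of the preceding lemma, i.e. $(Xa,b) = (a,\bar\alpha(X)b)$ for every generator $X$ of $\mathcal K_q$. Consequently, if $a \in \mathrm{Rad}$ then $(Xa,b) = (a,\bar\alpha(X)b) = 0$ for all $b$, so $Xa \in \mathrm{Rad}$; since the $x^-_{i,m}$, $\Omega_{\psi_i}(m)$ and $\gamma^{\pm 1/2}$ generate $\mathcal K_q$, this proves that $\mathrm{Rad}$ is a $\mathcal K_q$-submodule.

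To finish, the theorem just proved says $\mathcal N_q^-$ is simple over $\mathcal K_q$, so $\mathrm{Rad}$ is either $0$ or all of $\mathcal N_q^-$. Because $(1,1) = 1 \neq 0$, the element $1$ is not in $\mathrm{Rad}$, which excludes the second alternative; hence $\mathrm{Rad} = 0$ and the form is non-degenerate. I expect no genuine obstacle here: the whole difficulty has already been absorbed into the simplicity theorem, and the only point to watch is that every generator of $\mathcal K_q$ has its $\bar\alpha$-image again among the generators of $\mathcal K_q$, so that the contravariance really does force $\mathrm{Rad}$ to be a submodule rather than merely a subspace.
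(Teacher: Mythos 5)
Your proposal is correct and follows exactly the paper's own argument: the adjointness relations of \propref{form} make the radical a $\mathcal K_q$-submodule, and simplicity of $\mathcal N_q^-$ together with $(1,1)=1$ forces the radical to vanish. You have merely spelled out the contravariance check in more detail than the paper does.
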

\begin{proof}  By \propref{form} the radical of the form $(\enspace,\enspace)$ is a  $\mathcal K_q$-submodule of $\mathcal N_q^-$ and since $(1,1)=1$, the radical must be zero.
\end{proof}

We remark that in \cite{MR1115118}, Kashiwara introduced the algebra $\mathcal{B}_q$ and showed that $U_q^-(\mathfrak{g})$ is a simple  $\mathcal{B}_q$-module. This in turn played an important role in showing the existence of crystal base for $U_q^-(\mathfrak{g})$, hence the standard Verma module. We expect to show in a future publication that the Kashiwara algebra $\mathcal K_q$ will play a similar role in constructing a crystal-like base for the reduced imaginary Verma module.

\bibliographystyle{alpha}
\bibliography{math}
\def\cprime{$'$}

\end{document}